\newtheorem{theorem}{Theorem}[section]
\newtheorem{proposition}[theorem]{Proposition}
\newtheorem{corollary}[theorem]{Corollary}
\newtheorem{lemma}[theorem]{Lemma}
\newcommand{\Var}{\operatorname{Var}}
\newcommand{\Span}{\operatorname{span}}
\newcommand{\conv}{\operatorname{conv}}
\newcommand{\R}{\mathbb{R}}
\newcommand{\E}{\mathbb{E}}
\newcommand{\PP}{\mathbb{P}}
\newcommand{\one}{\mathbf{1}}
\newcommand{\NN}{\mathcal{N}}
\newcommand{\EE}{\mathcal{E}}
\newcommand{\const}{\operatorname{const}}
\begin{document}
\begin{frontmatter}

\title{Approximating the moments of marginals of high-dimensional
distributions\thanksref{T1}}
\runtitle{Approximating the moments of marginals}

\thankstext{T1}{Supported in part by NSF Grants DMS-09-18623 and DMS-10-01829.}

\begin{aug}
\author[A]{\fnms{Roman} \snm{Vershynin}\corref{}\ead[label=e1]{romanv@umich.edu}}
\runauthor{R. Vershynin}
\affiliation{University of Michigan}
\address[A]{Department of Mathematics\\
University of Michigan\\
Ann Arbor, Michigan 48109\\
USA\\
\printead{e1}} 
\end{aug}

\received{\smonth{12} \syear{2009}}
\revised{\smonth{6} \syear{2010}}

%
\begin{abstract}
For probability distributions on $\R^n$, we study the optimal sample
size $N = N(n,p)$ that suffices to uniformly approximate the $p$th
moments of all one-dimensional marginals. Under the assumption that the
marginals have bounded $4p$ moments, we obtain the optimal bound $N =
O(n^{p/2})$ for $p > 2$. This bound goes in the direction of bridging
the two recent results: a theorem of Guedon and Rudelson [\textit{Adv.
Math.} \textbf{208} (2007) 798--823] which has an extra logarithmic
factor in the sample size, and a result of Adamczak et al. [\textit{J.
Amer. Math. Soc.} \textbf{23} (2010) 535--561] which requires stronger
subexponential moment assumptions.
\end{abstract}

%
\begin{keyword}[class=AMS]
\kwd[Primary ]{62H12}
\kwd[; secondary ]{46B09}
\kwd{60B20}.
\end{keyword}
\begin{keyword}
\kwd{High-dimensional distributions}
\kwd{marginals}
\kwd{statistical estimation}
\kwd{heavy-tailed distributions}
\kwd{random matrices}.
\end{keyword}

\end{frontmatter}

\section{Introduction}

\subsection{The estimation problem}

We study the following problem:
how well can one approximate one-dimensional marginals of a distribution
on $\R^n$ by sampling?
Consider a random vector $X$ in $\R^n$, and suppose we would like to compute
the $p$th moments of the marginals $\langle X,x\rangle$ for all $x \in
\R^n$.
To this end, we sample $N$ independent copies $X_1,\ldots,X_N$ of $X$,
compute the empirical moment from that sample and we hope that
it gives a good approximation of the actual moment,
%
%
\begin{equation} \label{approx}
\sup_{x \in S^{n-1}} \Biggl| \frac{1}{N} \sum_{i=1}^N |\langle X_i,
x\rangle|^p - \E|\langle X, x\rangle|^p \Biggr| \le\varepsilon.
\end{equation}
Indeed, by the law of large numbers this quantity converges to zero as
$N \to\infty$.
To understand the quantitative nature of this convergence one would like
to estimate the optimal sample complexity $N = N(n,p,\varepsilon)$ for which
(\ref{approx}) holds with high probability. For $p=2$ this problem is
equivalent
to approximating the covariance matrix of $X$ by a sample covariance matrix,
and it was studied in \cite{KLS,B,R,MP,A,ALPT}.
For $p \ne2$, the problem was also studied in \cite{BLM,GM,GR,M,ALPT}.

A well-known \textit{lower} bound for the sample complexity is $N
\gtrsim
n$ for $1 \le p \le2$
and $N \gtrsim n^{p/2}$ for $p \ge2$.
Guedon and Rudelson \cite{GR} prove the upper bound
$N = O(n^{p/2} \log n)$ for $p \ge2$
under quite weak moment assumptions\setcounter{footnote}{1}\footnote
{The constant implicit
in the $O(\cdot)$ notation in the sample complexity $N$ depends
only on the constants implicit in the assumptions (\ref{minimal}); the
same convention applies to
other results.}
%
%
\begin{eqnarray} \label{minimal}
\|X\|_2 &=& O\bigl(\sqrt{n}\bigr) \qquad\mbox{a.s.},\nonumber\\[-8pt]\\[-8pt]
(\E|\langle X, x\rangle|^p)^{1/p} &=& O(1) \qquad\mbox{for all } x \in
S^{n-1}.\nonumber
\end{eqnarray}
The logarithmic term cannot, in general, be removed from the sample complexity;
this can be seen by considering a random vector $X$ uniformly
distributed in a set of $n$ orthogonal vectors
of Euclidean norm $\sqrt{n}$.
On the other hand, Adamczak et al. \cite{ALPT} recently managed to
remove the logarithmic term
for random vectors $X$ uniformly distributed in an isotropic convex
body $K$ in $\R^n$,
showing that for such distributions one has $N = O(n)$ for $1 \le p \le
2$ and $N = O(n^{p/2})$ for $p \ge2$.
Their result actually holds for all random vectors $X$ that satisfy the
\textit{sub-exponential} moment
assumptions
%
%
\begin{eqnarray} \label{sub-exponential}
\|X\|_2 &=& O\bigl(\sqrt{n}\bigr) \qquad\mbox{a.s.},\nonumber\\[-8pt]\\[-8pt]
(\E|\langle X, x\rangle|^q)^{1/q} &=& O(q) \qquad\mbox{for all $q \ge1$
and $x \in S^{n-1}$}.\nonumber
\end{eqnarray}
A program aiming at understanding general empirical processes with
sub-exponential
tails is put forward by Mendelson \cite{M,Me}.

\subsection{Distributions with finite moments: Main result}

At this moment there is no complete understanding of which
distributions on $\R^n$ require
logarithmic oversampling and which do not.
Clearly there is a gap between the minimal moment assumptions (\ref
{minimal}) of \cite{GR} and the
subexponential assumptions (\ref{sub-exponential}) of \cite{ALPT}.
The present note makes a step toward closing this gap.\looseness=-1

Distributions with tails heavier than exponential frequently arise in
statistics, economics,
engineering and other exact sciences like geophysics and environmental science.
Heavy-tailed distributions are frequently used to model data that
exhibit large fluctuations (see, e.g., \cite{MeS,KM,AFT} and the
references therein).
A very basic theoretical example of a heavy-tailed random vector in $\R
^n$ is
$X = (\xi_1,\ldots,\xi_n)$ where $\xi_j$ are independent random
variables with mean
zero, unit variance and power-law tails $\PP\{ |\xi_j| > t \} \sim
t^{-q}$ for some fixed exponent $q > 2$
(e.g., normalized Pareto distrbution to mention a specific example).
Such random vectors clearly\vadjust{\goodbreak} satisfy $\E\|X\|_2^2 = n$, thus $\|X\|_2 =
O(\sqrt{n})$ with high probability.
Moreover, the marginals have moments $(\E|\langle X, x\rangle
|^{q'})^{1/q'} = O(1)$ for all $q'<q$,
but the higher moments (for $q' > q$) are infinite.

We shall show that a version of the result of Adamczak et al. \cite{ALPT}
holds \textit{under finite moment assumptions} for $p \ne2$;
specifically, the logarithmic oversampling is not needed if we
replace $p$ by $4p$ in the minimal moment assumptions (\ref{minimal}).
We shall first consider independent random vectors $X_i$ in $\R^n$
that satisfy
%
%
\begin{equation} \label{Xi}
\|X_i\|_2 \le K \sqrt{n} \qquad\mbox{a.s.},\qquad
(\E|\langle X_i, x\rangle|^q)^{1/q} \le L \qquad\mbox{for all } x \in
S^{n-1}.\hspace*{-28pt}
\end{equation}

\begin{theorem}[(Approximation of marginals)] \label{sampling}
Let $p>2$, $\varepsilon> 0$ and $\delta> 0$.
Consider independent random vectors $X_i$ in $\R^n$ which satisfy
(\ref{Xi})
for $q=4p$.
Let $N \ge C n^{p/2}$ where $C$ is a suitably large quantity
that depends (polynomially) only on $K,L,p,\varepsilon, \delta$.
Then with probability at least $1-\delta$ one has
%
%
\begin{equation} \label{eq sampling}
\sup_{x \in S^{n-1}} \Biggl| \frac{1}{N} \sum_{i=1}^N |\langle X_i,
x\rangle|^p - \E|\langle X_i, x\rangle|^p \Biggr|
\le\varepsilon.
\end{equation}
\end{theorem}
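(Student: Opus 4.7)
The plan is to adapt the standard empirical-process paradigm to heavy tails via a truncation argument. Write $|u|^p = \phi_\tau(u) + \rho_\tau(u)$, where $\phi_\tau(u) = |u|^p \wedge \tau^p$ is uniformly bounded and $p\tau^{p-1}$-Lipschitz, and the remainder $\rho_\tau$ is supported on $\{|u|>\tau\}$; here $\tau>0$ is a truncation level to be optimized. The empirical process in \eqref{eq sampling} splits into a bounded Lipschitz part and a heavy tail, which we estimate separately.

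For the bounded part, a standard symmetrization reduces the problem to controlling the Rademacher process
\[
\E\,\sup_{x\in S^{n-1}}\Big|\tfrac{1}{N}\textstyle\sum_i \e_i \phi_\tau(\<X_i, x\>)\Big|.
\]
Since $\phi_\tau$ is $p\tau^{p-1}$-Lipschitz and vanishes at $0$, Talagrand's contraction principle reduces this to the linear Rademacher process, yielding an upper bound of order $p\tau^{p-1}K\sqrt{n/N}$ via $\E\|\sum_i \e_i X_i\|_2 \le K\sqrt{Nn}$ and the almost sure norm constraint $\|X_i\|_2\le K\sqrt n$. Talagrand's Bennett-type concentration for empirical processes of uniformly bounded functions (sup-norm $\tau^p$, variance $\le L^{2p}$) then upgrades this expectation bound to a high-probability statement with the correct quantitative dependence on $\e$ and $\d$.

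The tail part is where the $4p$-th moment assumption enters crucially, through the pointwise inequality $|u|^p \one_{|u|>\tau} \le |u|^{4p}/\tau^{3p}$. This reduces the tail to the empirical $4p$-th moment $\sup_x \tfrac1N \sum_i |\<X_i, x\>|^{4p}$, which in turn is bounded by $K^{4p-2}n^{2p-1}\|\hat\Sigma\|_{\mathrm{op}}$ via the norm constraint on $X_i$, where $\hat\Sigma = \tfrac{1}{N}\sum_i X_i X_i^\top$ is the sample second-moment matrix. The operator norm $\|\hat\Sigma\|_{\mathrm{op}}$ is $O(L^2)$ by Guedon--Rudelson applied to the second moment, valid once $N\gtrsim n\log n$, which is implied by $N\ge n^{p/2}$ for $p>2$.

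The main obstacle is the balance of the truncation level $\tau$: it must be chosen large enough so that the tail (controlled by $n^{2p-1}/\tau^{3p}$) drops below $\e$, yet small enough so that the Lipschitz constant $p\tau^{p-1}$ in the contraction estimate remains compatible with $N\approx n^{p/2}$. It is precisely the presence of the $4p$-th moment --- twice the exponent needed in the Guedon--Rudelson approach --- that creates enough room for this balance to succeed without a logarithmic oversampling. A further technical point will be to track the concentration constants carefully so that the final sample complexity depends only polynomially on $K$, $L$, $p$, $\e$, and $\d$, as claimed in the theorem.
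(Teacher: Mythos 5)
Your truncation decomposition and symmetrization--contraction step for the bounded part essentially reproduce what the paper does (via Lemma~\ref{reduction to large}, quoted from \cite{ALPT}), and the resulting first term $\sim \tau^{p-1}\sqrt{n/N}$ is the same. The gap is in the tail estimate, and it is fatal to the claimed sample complexity.

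Your bound on the tail is $\tau^{-3p}\sup_x \frac1N\sum_i|\<X_i,x\>|^{4p}\lesssim \tau^{-3p}K^{4p-2}n^{2p-1}\|\hat\Sigma\|_{\mathrm{op}}\lesssim n^{2p-1}/\tau^{3p}$. Forcing this to be $\le\e$ requires $\tau\gtrsim (n^{2p-1}/\e)^{1/(3p)}$; feeding this into the bounded-part requirement $\tau^{p-1}\sqrt{n/N}\le\e$ gives $N\gtrsim n^{(4p^2-3p+2)/(3p)}$ (up to $\e$-factors). One checks that $\frac{4p^2-3p+2}{3p}>\frac p2$ for every $p>2$ (the difference $5p^2-6p+4$ has negative discriminant), so this route produces a sample complexity that is polynomially worse than $n^{p/2}$ --- for $p=3$ it gives $n^{29/9}\approx n^{3.2}$ instead of $n^{3/2}$. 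Even the tightest possible bound on $\sup_x\frac1N\sum_i|\<X_i,x\>|^{4p}$ at sample size $N\sim n^{p/2}$ is on the order of $n^{3p/2}$ (take $x$ in the direction of a single row), and redoing the balance with this sharper estimate still fails: the Lipschitz constant $\tau^{p-1}$ from the contraction step grows too fast relative to how slowly the tail decays in $\tau$. So the claim that ``the presence of the $4p$-th moment creates enough room for this balance to succeed'' is not correct; the crude pointwise inequality $|u|^p\one_{|u|>\tau}\le |u|^{4p}/\tau^{3p}$ throws away precisely the information one needs.

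What the paper does instead is not bound the $4p$-th empirical moment at all. The set $E_B(x)$ of large coefficients is small (about $n/B^2$ indices), and the key new input --- the decoupling argument of Proposition~\ref{decoupling} combined with the $\ell_2\to\ell_{2,\infty}$ operator norm bound of Theorem~\ref{norm} --- gives the uniform-in-$x$ estimate $\|(\<X_i,x\>)_{i\in E_B(x)}\|_{2,\infty}\lesssim \sqrt{n/\e}$. Since $\|\cdot\|_p\lesssim\|\cdot\|_{2,\infty}$ for $p>2$, the tail is then $\lesssim (n/\e)^{p/2}/N$, which is $\lesssim\e$ precisely at $N\gtrsim n^{p/2}$. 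This estimate exploits both the small cardinality of $E_B$ and the rapid decay of the decreasing rearrangement of $(\<X_i,x\>)$, neither of which survives in your crude power bound. The decoupling step, which lets one pass to a net of dimension $\sim\d|E_B|$ while preserving the large inner products, is exactly what removes the logarithmic oversampling present in Guedon--Rudelson; without it the union bound over $x$ is too expensive. That mechanism is absent from your argument, and adding a bare Talagrand concentration step on top of the Lipschitz part cannot compensate, since the loss occurs already at the level of the expectation of the tail process.
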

\begin{remark*}
1. A more elaborate version of this result is Theorem~\ref{sampling
sharp} below.
One can get more information on the probability in question using general
concentration of measure results as is done in \cite{GR}. One can also
modify the argument
to deduce a version of this result ``with high probability'' in spirit
of \cite{ALPT}, that is,
with probability converging to $1$ (at polynomial rate) as $n \to
\infty$.

2. A standard modification of the argument (as in \cite{ALPT}) gives
an optimal result
also in the range $1 \le p < 2$.
Namely, if the random vectors satisfy (\ref{Xi}) for some $q \ge4p$,
$q>4$, then the conclusion
(\ref{eq sampling}) holds for $N \ge C_{K,L,p,q,\varepsilon,\delta} n$.

3. The method of the present note does not seem to work for $p=2$; this
important and more difficult case
is addressed in \cite{V} with an oversampling by a possibly parasitic
$(\log\log n)^{c_{p,q}}$ factor.
\end{remark*}

The argument of this paper also yields sharp bounds on the norms
of random operators $\ell_2 \to\ell_p$. The following result is a
version of a result
of \cite{ALPT}, Corollary~4.12, proved there under the stronger
sub-exponential moment
assumptions (\ref{sub-exponential}).
\begin{theorem}[(Norms or random matrices)] \label{norm p>2}
Let $p > 2$ and $\delta> 0$.
Consider independent random vectors $X_i$ in $\R^n$ which satisfy
(\ref{Xi}) for $q=4p$.
Then the $N \times n$ random matrix $A$ with rows $X_1,\ldots,X_N$ satisfies
with probability at least $1-\delta$ that
\[
\|A\|_{\ell_2 \to\ell_p} \le C (n^{1/2} + N^{1/p}),
\]
where $C$ depends (polynomially) only on $K,L,p,\delta$.\vadjust{\goodbreak}
\end{theorem}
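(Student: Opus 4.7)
The plan is to deduce Theorem~\ref{norm p>2} from Theorem~\ref{sampling} by a simple padding argument. Since
$$
\|A\|_{\ell_2\to\ell_p} = \sup_{x\in S^{n-1}} \Big(\sum_{i=1}^N |\<X_i,x\>|^p\Big)^{1/p},
$$
it suffices to bound the inner sum uniformly in $x$ by a quantity of order $N + n^{p/2}$ with probability at least $1-\d$.

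Theorem~\ref{sampling} already delivers this in the regime $N \gtrsim n^{p/2}$: fixing $\e = 1$ there and letting $C_0 = C_0(K,L,p,\d)$ be the corresponding threshold, we obtain $\frac{1}{N}\sum_{i=1}^N |\<X_i,x\>|^p \le L^p + 1$ uniformly in $x \in S^{n-1}$ whenever $N \ge C_0 n^{p/2}$ (using that $(\E|\<X_i,x\>|^p)^{1/p} \le L$ by H\"older). To cover the remaining regime $N < C_0 n^{p/2}$, I would enlarge the sample: set $N' := \max(N, \lceil C_0 n^{p/2}\rceil)$, and on a suitably augmented probability space introduce independent random vectors $X_{N+1},\ldots,X_{N'}$, each distributed as $X_1$ and independent of $X_1,\ldots,X_N$. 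The enlarged family still satisfies \eqref{Xi} for $q = 4p$, so Theorem~\ref{sampling} applied to it yields with probability at least $1-\d$ that
$$
\sup_{x \in S^{n-1}} \frac{1}{N'}\sum_{i=1}^{N'} |\<X_i,x\>|^p \le L^p + 1.
$$

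On this event, for every $x \in S^{n-1}$,
$$
\sum_{i=1}^N |\<X_i,x\>|^p \le \sum_{i=1}^{N'} |\<X_i,x\>|^p \le (L^p+1) N' \le (L^p+1)\bigl(N + C_0 n^{p/2}\bigr).
$$
Taking $p$-th roots and using the subadditivity $(a+b)^{1/p} \le a^{1/p} + b^{1/p}$ (valid for $p \ge 1$) gives
$$
\|Ax\|_p \le (L^p+1)^{1/p}\bigl(N^{1/p} + C_0^{1/p}\, n^{1/2}\bigr),
$$
which is the required bound, with the constant $C = (L^p+1)^{1/p}\max(1,C_0^{1/p})$ depending polynomially on $K,L,p,\d$ (it inherits the polynomial dependence of $C_0$ from Theorem~\ref{sampling}).

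I do not anticipate a real obstacle here: the reduction is essentially a single application of Theorem~\ref{sampling}, and the only point requiring minor care is the probability-space enlargement, which is harmless because the conclusion concerns only $X_1,\ldots,X_N$ and the added vectors can be produced on a product probability space without affecting the joint distribution of the original sample.
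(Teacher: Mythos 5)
Your proof is correct, and it takes a genuinely different (and somewhat cleaner) route than the paper's. The paper dispatches Theorem~\ref{norm p>2} by splitting into two regimes: for $N\le n$ it applies Theorem~\ref{norm} directly, using the operator bound $\|A\|_{\ell_2\to\ell_{2,\infty}}\lesssim\sqrt{n}+t\sqrt{N}$ together with $\|\cdot\|_p\lesssim\|\cdot\|_{2,\infty}$; for $N\ge n$ it invokes Proposition~\ref{sampling sharp} with $\e=1$ (which only needs $N\gtrsim n$, not $N\gtrsim n^{p/2}$). You instead apply the final approximation Theorem~\ref{sampling} as a black box after padding the sample up to $N'=\max(N,\lceil C_0 n^{p/2}\rceil)$ with fresh independent copies of $X_1$; this handles all $N$ in one stroke because the event of interest depends only on $X_1,\ldots,X_N$ and the enlargement of the probability space is harmless. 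Your route buys uniformity and avoids explicitly quoting Theorem~\ref{norm} or Proposition~\ref{sampling sharp}, at the cost of hiding the sharper intermediate estimates inside the black box. The only cosmetic nitpick is that $N'\le N+\lceil C_0 n^{p/2}\rceil\le N+(C_0+1)n^{p/2}$, so the constant you extract is $(L^p+1)^{1/p}\max\bigl(1,(C_0+1)^{1/p}\bigr)$; this is still polynomial in $K,L,p,\d$ since $C_0$ is and $t\mapsto t^{1/p}$ is monotone, so the conclusion stands.
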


\subsection{On the boundedness assumptions}

Let us take a closer look on our assumptions (\ref{Xi})
on the distribution. The boundedness assumption $\|X_i\|_2 = O(\sqrt
{n})$ a.s.
seems to be too strong---even the standard Gaussian distribution in
$\R^n$ does not satisfy it.
We will observe that, although this assumption cannot be formally dropped,
it can be removed by slightly modifying the estimation
process---discarding the the sample
vectors $X_i$ that do not satisfy it.\looseness=-1

First, it is easy to see that the boundedness assumption
$\|X_i\|_2 = O(\sqrt{n})$ a.s. cannot be dropped from our results.
To this end, one easily constructs a random vector whose
Euclidean norm has sufficiently heavy tails\footnote{For example, one
can achieve this
by considering a version of a ``multidimensional Pareto'' distribution
\cite{MeS}---the product of the standard Gaussian random vector in $\R
^n$ by an
independent scalar
random variable $\xi$ with a power-law tail.}
so that $\max_{i \le N} \|X_i\|_2 \gg\sqrt{n}$ with high probability
for $N \gg1$
and, in particular, for the stated number of samples $N \sim n^{p/2}$.
Then the approximation inequality (\ref{eq sampling}) will fail.
Indeed, once we choose $x$ in the direction of the vector $X_i$ with
the largest Euclidean norm,
we will have with high probability
that $|\langle X_i,x\rangle|^p = \|X_i\|_2^p \gg n^{p/2}$, which will
force the
average of the $N$ terms in (\ref{eq sampling}) to be much larger than
$n^{p/2}/N \sim\const$
while $\E|\langle X_i, x\rangle| ^p = O(1)$.

As a side note, the last observation also shows that the sample size $N
\sim n^{p/2}$
in Theorem~\ref{sampling} is optimal.

Let us also note that the weaker boundedness assumption
%
%
\begin{equation} \label{exp norm}
(\E\|X_i\|_2^q)^{1/q} \le L \sqrt{n}
\end{equation}
follows automatically from the second (moment) assumption in (\ref{Xi}).
To see this, we represent
$\|X_i\|_2^2 = \sum_{j=1}^n Z_j$ where $Z_j = |\langle X_i, e_j\rangle|^2$
and where $(e_j)$ is an orthonormal basis in $\R^n$.
Then Minkowski's inequality yields (\ref{exp norm})
\begin{eqnarray*}
(\E\|X_i\|_2^q)^{2/q}
&=& \Biggl[ \E\Biggl( \sum_{j=1}^n Z_j \Biggr)^{q/2} \Biggr]^{2/q}
\le\sum_{j=1}^n ( \E Z_j^{q/2} )^{2/q}\\
&=& \sum_{j=1}^n ( \E|\langle X_i, e_j\rangle|^q )^{2/q}
\le L^2 n.
\end{eqnarray*}

Although, as we noticed before, the strong boundedness assumptions
cannot be dropped formally,
they can be easily transferred into the estimation process. Instead of
using all sample points $X_i$
in the approximation inequality (\ref{eq sampling}), one can only use
those with moderate norms,
$\|X_i\|_2 = O(\sqrt{n})$. This will produce a similar approximation
result without any boundedness assumption.
Just the previous moment assumption will suffice:
%
%
\begin{equation} \label{Xi moment}
(\E|\langle X_i, x\rangle|^q)^{1/q} \le L \qquad\mbox{for all } x \in S^{n-1}.
\end{equation}

\begin{corollary}[(Approximation of marginals: no boundedness
assumption)] \label{sampling no bdd}
Let $p>2$, $\varepsilon> 0$, $\delta> 0$ and $K>0$.
Consider independent\vadjust{\goodbreak} random vectors $X_i$ in $\R^n$ which satisfy
(\ref{Xi moment})
for $q=4p$.
Let $N \ge C n^{p/2}$ where $C$ is a suitably large quantity
that depends (polynomially) only on $K,L,p,\varepsilon, \delta$.
Denote
\[
I := \bigl\{ i \le N\dvtx \|X_i\|_2 \le K \sqrt{n} \bigr\}.
\]
Then with probability at least $1-\delta$ one has
\[
\sup_{x \in S^{n-1}} \Biggl| \frac{1}{N} \sum_{i \in I} |\langle X_i,
x\rangle|^p - \E|\langle X_i, x\rangle|^p \Biggr|
\le\varepsilon+ K^{p-q} L^q.
\]
\end{corollary}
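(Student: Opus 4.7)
The plan is to apply Theorem~\ref{sampling} to the truncated vectors
\[
\tilde X_i := X_i \cdot \one_{\{\|X_i\|_2 \le K\sqrt{n}\}},
\]
so that all the probabilistic work is done by the main theorem and the removal of the boundedness assumption is handled by a deterministic bias estimate. First I would verify that the $\tilde X_i$ satisfy hypothesis \eqref{Xi} with the same constants $K$ and $L$: the norm bound $\|\tilde X_i\|_2 \le K\sqrt{n}$ holds by construction, and since multiplying by an indicator can only shrink a nonnegative integrand, $\E|\<\tilde X_i,x\>|^q \le \E|\<X_i,x\>|^q \le L^q$ for every $x \in S^{n-1}$. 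Thus for $N \ge Cn^{p/2}$ (with $C$ as in Theorem~\ref{sampling}), with probability at least $1-\d$,
\[
\sup_{x \in S^{n-1}} \Big| \frac{1}{N}\sum_{i=1}^N |\<\tilde X_i,x\>|^p - \E|\<\tilde X_i,x\>|^p \Big| \le \e.
\]

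Next I would observe that the empirical sum above equals $\frac{1}{N}\sum_{i \in I}|\<X_i,x\>|^p$, because $\tilde X_i = X_i$ precisely when $i \in I$ and vanishes otherwise. By the triangle inequality, it therefore suffices to bound the bias
\[
\E|\<X_i,x\>|^p - \E|\<\tilde X_i,x\>|^p = \E\bigl[|\<X_i,x\>|^p \one_{\{\|X_i\|_2 > K\sqrt{n}\}}\bigr]
\]
uniformly in $x \in S^{n-1}$. Here I would apply H\"older's inequality with the conjugate exponents $q/p$ and $q/(q-p)$, which are valid since $q=4p>p$:
\[
\E\bigl[|\<X_i,x\>|^p \one_{\{\|X_i\|_2 > K\sqrt{n}\}}\bigr] \le \bigl(\E|\<X_i,x\>|^q\bigr)^{p/q} \cdot \P\{\|X_i\|_2 > K\sqrt{n}\}^{(q-p)/q}.
\]
The first factor is at most $L^p$ by \eqref{Xi moment}. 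For the tail, I would invoke the moment inequality \eqref{exp norm} that the author already derived from \eqref{Xi moment}, namely $(\E\|X_i\|_2^q)^{1/q} \le L\sqrt{n}$, and apply Markov's inequality to get $\P\{\|X_i\|_2 > K\sqrt{n}\} \le (L/K)^q$. Raising to the power $(q-p)/q$ yields $(L/K)^{q-p}$, and combining factors gives the bias bound $L^p \cdot (L/K)^{q-p} = K^{p-q}L^q$, matching the error term in the statement.

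There is no real obstacle: the only step requiring any thought is the H\"older split, which is chosen precisely so that the $p$-th marginal moment pairs against the tail of $\|X_i\|_2$ to produce the stated power $K^{p-q}$. Everything else is bookkeeping: the truncation manifestly preserves both hypotheses in \eqref{Xi}, the empirical average restricted to $I$ coincides with the full average over the truncated vectors, and Theorem~\ref{sampling} supplies the probabilistic bound with no additional work.
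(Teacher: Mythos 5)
Your proposal is correct and follows essentially the same route as the paper: truncate to $\tilde X_i = X_i\one_{\{\|X_i\|_2\le K\sqrt n\}}$, apply Theorem~\ref{sampling} to the truncated vectors, and control the bias $\E|\<X_i,x\>|^p\one_{\{\|X_i\|_2>K\sqrt n\}}$ via H\"older with exponents $q/p$ and $q/(q-p)$ together with the tail bound $\P\{\|X_i\|_2>K\sqrt n\}\le(L/K)^q$ from \eqref{exp norm}. The exponent bookkeeping ($1-p/q=(q-p)/q$) and the final error $L^qK^{p-q}$ match the paper exactly.
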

\begin{pf}
Consider the events $\EE_i = \{ \|X_i\|_2 \le K \sqrt{n} \}$.
The conclusion then follows by applying Theorem~\ref{sampling} to the
random vectors
$\bar{X}_i = X_i \one_{\EE_i}$, which clearly satisfy (\ref{Xi}).
Noting that $|\langle\bar{X}_i, x\rangle|^p = |\langle X_i, x\rangle
|^p \one_{\EE_i}$, we obtain this way that
%
%
\begin{equation} \label{approx bar}
\sup_{x \in S^{n-1}} \Biggl| \frac{1}{N} \sum_{i=1}^N |\langle X_i,
x\rangle|^p \one_{\EE_i} - \E|\langle\bar{X}_i, x\rangle|^p \Biggr|
\le\varepsilon.
\end{equation}
To complete the proof, it remains to estimate the error
%
%
\begin{eqnarray} \label{error}
\bigl| \E|\langle X_i, x\rangle|^p - \E|\langle\bar{X}_i, x\rangle
|^p \bigr|
&=& \E|\langle X_i, x\rangle|^p \one_{\EE_i^c}\nonumber\\[-8pt]\\[-8pt]
&\le&( \E|\langle X_i, x\rangle|^q )^{p/q} ( \PP(\EE
_i^c) )^{1-p/q},\nonumber
\end{eqnarray}
where we used H\"older's inequality.
To estimate the probability of $\EE_i^c$ we use (\ref{exp norm})
which follows from our moment
assumption (\ref{Xi moment}) as we noticed before. By Chebyshev's
inequality we obtain
\[
\PP(\EE_i^c) = \bigl\{ \|X_i\|_2 > K \sqrt{n} \bigr\} \le(L/K)^q.
\]
Using this and moment assumption (\ref{Xi moment}) we conclude that
the error (\ref{error}) is bounded by
$L^p (L/K)^{q(1-p/q)} = L^q K^{p-q}$.
Therefore in (\ref{approx bar}) we can replace $\E|\langle\bar
{X}_i, x\rangle|^p$ by $\E|\langle X_i, x\rangle|^p$
by increasing the error bound $\varepsilon$ by $L^q K^{p-q}$. This
completes the proof.
\end{pf}
\begin{remarks*}
1. Of course one can achieve the approximation error $2\varepsilon$ in
Corollary~\ref{sampling no bdd}
by choosing the threshold $K = K(L,\varepsilon)$ sufficiently large.

2. For some distributions one may be able to show that with high probability,
%
%
\begin{equation} \label{max small}
\max_{i \le N} \|X_i\|_2 \le K \sqrt{n}
\end{equation}
for some moderate value of $K$ [ideally $K=O(1)$] and for the desired
sample size~$N$.
In this case, with high probability all events $\EE_i$ in
Corollary~\ref{sampling no bdd}
hold simultaneously, and therefore they can be dropped from the approximation
inequality. One thus obtains the same bound as in Theorem \ref
{sampling} except
for the extra error term $K^{p-q} L^q$.

This situation occurs, for example, in the estimation result Adamczak
et al. \cite{ALPT} mentioned above.\vadjust{\goodbreak}
For the uniform distribution on an isotropic convex body, the
concentration theorem of Paouris \cite{P}
implies that
$\PP(\|X_i\|_2 \ge K \sqrt{n}) \le\exp(-\sqrt{n})$.
By union bound this implies that (\ref{max small}) holds
with probability $1 - N \cdot\exp(-\sqrt{n})$, which is almost $1$
for sample sizes $N$ growing linearly
or polynomially in $n$.
This is why in the final result of \cite{ALPT}
for uniform distributions on convex bodies no boundedness assumption is
needed, whereas for general
subexponential distributions one needs the boundedness assumption $\|
X_i\|_2 = O(\sqrt{n})$ a.s.\vspace*{-2pt}
\end{remarks*}

\subsection{\texorpdfstring{Heuristics of the proof of Theorem \protect\ref{sampling}}
{Heuristics of the proof of Theorem 1.1}}

Bourgain \cite{B} first demonstrated that proving deviation estimates
like (\ref{eq sampling})
reduces to bounding the contribution to the sum of the large
coefficients---those
for which $|\langle X_i, x\rangle| > B$ for a suitably large fixed
level $B$. Such reduction is used in
some of the later approaches to the problem \cite{GM,ALPT} as well as
in the present note.
However, after this reduction we use a different route.
Suppose for some vector $x \in S^{n-1}$ there are $s = s(B)$ large
coefficients as above.
The new ingredient of this note is a decoupling argument which is
formalized in Proposition~\ref{decoupling}.
It transports the vector $x$ into the linear span of at most $0.01 s$
of these $X_i$,
while approximately retaining the largeness of the coefficients,
$|\langle X_i, x\rangle| > B/4$.
Let us condition on these $0.01s$ random vectors $X_i$. On the one hand,
we have reduced the ``complexity'' of the problem---our $x$ now lies in
a fixed $0.01s$-dimensional subspace,
which has an $\frac{1}{2}$-net in the Euclidean metric of cardinality
$e^{0.02s}$.
On the other hand, the inequality $|\langle X_i, x\rangle| > B$ holds
for the remaining $0.99s$
vectors $X_i$ of which $x$ is independent; by (\ref{Xi}) and
Chebyshev's inequality
this happens with probability $(L/B)^{qs}$. Choosing the level $B$
suitably large so that $(L/B)^{qs} \ll e^{-0.02s}$
allows us to take the union bound over the net, and therefore to
control the contribution of the
large coefficients.\vspace*{-2pt}

\subsection{Organization of the paper}

In Section~\ref{s: decoupling} we develop the decoupling argument.
We use it to control the contribution of the large coefficients in
Section~\ref{s: norm}.
This is formalized in Theorem~\ref{norm} where we estimate the norm of
a random matrix $A$ with rows
$X_i$ in the operator norm $\ell_2 \to\ell_{2,\infty}$, and also in
Lemma~\ref{large coefficients}.
In Section~\ref{s: sampling}, we deduce in a standard way the main
results of this note---Theorem~\ref{sampling} on approximating the
moments of marginals
and Theorem~\ref{norm p>2} on the norms of random matrices $\ell_2
\to\ell_p$.

In what follows, $C$ and $c$ will stand for positive absolute constants
(suitably chosen);
quantities that depend only on the parameters in question such as $K,L,p,q$
will be denoted $C_{K,L,p,q}$.\vspace*{-2pt}

\section{Decoupling} \label{s: decoupling}\vspace*{-2pt}

\begin{proposition}[(Decoupling)] \label{decoupling}
Let $X_1,\ldots,X_s$ be vectors in $\R^n$ which satisfy the following
conditions
for some $K_1, K_2$:
%
%
\begin{equation} \label{Xi deterministic}
\|X_k\|_2 \le K_1 \sqrt{n},
\frac{1}{s} \sum_{i \le s, i \ne k} \langle X_i, X_k \rangle^2
\le K_2^4 n,\qquad
k = 1,\ldots,s.\vadjust{\goodbreak}
\end{equation}
Let $\delta\in(0,1)$ and let $B \ge C \delta^{-3/2} K_1$, $M \ge C
\delta^{-1/2} K_2^2/K_1$.
Assume that there exists $x \in S^{n-1}$ such that
\[
\langle X_i, x \rangle\ge B \sqrt{n/s} + M,\qquad i=1,\ldots,s.
\]
Then there exist a subset $I \subseteq\{1,\ldots,s\}$, $|I| \ge
(1-\delta)s$,
and a vector $y \in S^{n-1} \cap\Span(X_i)_{i \in I^c}$ such that
\[
\langle X_i, y \rangle\ge\tfrac{1}{4} \bigl(B \sqrt{n/s} + M\bigr),\qquad
i\in I.
\]
\end{proposition}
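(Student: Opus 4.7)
My plan is to take $y$ to be a normalized projection of $x$ onto $H_J := \Span(X_j : j \in J)$ for a carefully chosen small set $J \subseteq \{1,\ldots,s\}$, which will play the role of $I^c$. Write $a_i := \< X_i, x\>$, so by hypothesis $a_i \ge B\sqrt{n/s} + M$. Let $P_J$ denote the orthogonal projection onto $H_J$, and set $u_J := x - P_J x$. The identity $\< X_i, P_J x\> = a_i - \< X_i, u_J\>$ shows that if $J$ satisfies $\< X_i, u_J\> \le \tfrac{3}{4}\, a_i$ for every $i \notin J$, then the vector $y := P_J x / \|P_J x\|$ (well-defined for $J \ne \emptyset$, since $\< X_j, P_J x\> = a_j > 0$ for any $j \in J$) satisfies
$$
\< X_i, y\> \;=\; \frac{a_i - \< X_i, u_J\>}{\|P_J x\|} \;\ge\; \frac{a_i}{4\,\|P_J x\|} \;\ge\; \frac{a_i}{4} \;\ge\; \frac{B\sqrt{n/s} + M}{4}
$$
for every $i \notin J$, using $\|P_J x\| \le \|x\| = 1$. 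Setting $I := \{1,\ldots,s\} \setminus J$ then completes the construction, provided $|J| \le \delta s$.

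I would produce such a $J$ by a greedy peeling. Initialize $J_0 = \emptyset$; at step $k{+}1$, call an index $i \notin J_k$ \emph{bad} if $\< X_i, u_{J_k}\> > \tfrac{3}{4}\, a_i$, and while a bad index exists, pick any such $i_{k+1}$ and set $J_{k+1} := J_k \cup \{i_{k+1}\}$. Halt when no bad index remains. The final $J$ has the desired property by construction, so the entire argument reduces to bounding the iteration count by $\delta s$. The mechanism is that each added bad index inflates $\|P_J x\|^2$ by a definite amount. Writing $v_{k+1} := X_{i_{k+1}} - P_{J_k} X_{i_{k+1}}$, so that $H_{J_{k+1}} = H_{J_k} \oplus \R v_{k+1}$, the Pythagorean theorem yields
$$
\|P_{J_{k+1}} x\|^2 - \|P_{J_k} x\|^2 \;=\; \frac{\< x, v_{k+1}\>^2}{\|v_{k+1}\|^2} \;=\; \frac{\< X_{i_{k+1}}, u_{J_k}\>^2}{\|v_{k+1}\|^2} \;>\; \frac{(3 a_{i_{k+1}}/4)^2}{\|v_{k+1}\|^2},
$$
and telescoping against the trivial bound $\|P_J x\|^2 \le 1$ caps the iteration count.

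It remains to convert this into a concrete bound matching the stated hypotheses. The crude estimate $\|v_{k+1}\|^2 \le \|X_{i_{k+1}}\|^2 \le K_1^2 n$, combined with $a_{i_{k+1}}^2 \ge B^2 n/s$, already yields an increment of order $B^2/(K_1^2 s)$ per step, which handles the $B$-dependence on its own. I expect the second hypothesis $\tfrac{1}{s}\sum_{i \ne k}\< X_i, X_k\>^2 \le K_2^4 n$ to enter in the complementary regime where $\|P_{J_k} X_{i_{k+1}}\|$ is non-negligible and the crude bound on $\|v_{k+1}\|^2$ is wasteful: expressing $P_{J_k} X_{i_{k+1}}$ via the Gram matrix of $(X_j)_{j \in J_k}$ and controlling its norm through the off-diagonal mass bounded by $K_2$, one should obtain a refined estimate that, paired with the $M$-part of $a_{i_{k+1}} \ge M$, produces the stated condition on $M$. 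The main obstacle, as I see it, is precisely this two-case bookkeeping: juggling the $B$- and $M$-contributions to $a_{i_{k+1}}^2$ against two complementary estimates on $\|v_{k+1}\|^2$ so as to secure termination in at most $\delta s$ steps under exactly the given hypotheses, with their specific powers of $\delta$.
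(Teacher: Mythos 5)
Your greedy projection argument is a genuinely different route from the paper's. The paper proves Proposition~\ref{decoupling} by a randomized selection: it writes a separating vector $\bar{x}\in\conv\{X_i/a\}$ as a convex combination $\sum\l_i X_i/a$, applies i.i.d.\ Bernoulli selectors $\d_i$ to zero out most coefficients, and shows that the resulting random $\bar y$ retains small norm (so normalization helps) and retains $\<X_k/a,\bar y\>\gtrsim\d$ for most unselected $k$ --- this second-moment control of $\<X_k/a,\bar y\>$ is exactly where the $K_2$ hypothesis and the lower bound on $M$ enter. Your argument replaces all of that with a deterministic peeling via orthogonal projections.

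The part you flag as ``the main obstacle'' is in fact not one: your crude estimate alone already closes the proof under the stated hypotheses, and indeed under strictly weaker ones. Each bad step contributes
$$
\|P_{J_{k+1}}x\|_2^2-\|P_{J_k}x\|_2^2 \;>\; \frac{(3a_{i_{k+1}}/4)^2}{\|X_{i_{k+1}}\|_2^2}\;\ge\;\frac{9\,B^2\,n/s}{16\,K_1^2\,n}\;=\;\frac{9B^2}{16K_1^2 s},
$$
and telescoping against $\|P_Jx\|_2^2\le 1$ gives $|J|<\frac{16K_1^2 s}{9B^2}$. This is $\le\d s$ as soon as $B\ge\frac{4}{3}\d^{-1/2}K_1$, which is implied by (and much weaker than) the assumed $B\ge C\d^{-3/2}K_1$. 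So the second hypothesis involving $K_2$, the lower bound on $M$, and the refined two-regime estimate on $\|v_{k+1}\|_2$ are all unnecessary in your approach; the conclusion already follows, with $\<X_i,y\>\ge\tfrac14 a_i\ge\tfrac14(B\sqrt{n/s}+M)$ for $i\notin J$ because $\|P_Jx\|_2\le 1$. (You correctly check the minor points: bad indices satisfy $v_{k+1}\ne 0$, $J$ is nonempty after the first step since $a_i>0$, and $P_Jx\ne 0$ because $\<X_j,P_Jx\>=a_j>0$ for $j\in J$.)

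In short: your proof is correct, more elementary, and actually establishes a sharper version of the proposition --- one requiring only the norm bound $\|X_k\|_2\le K_1\sqrt n$ and $B\gtrsim\d^{-1/2}K_1$, with no condition on $K_2$ or $M$. The paper's randomized argument is what forces the stronger hypotheses; these are artifacts of that method, not of the statement.
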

\begin{pf}
Without loss of generality, we may assume that $\delta>0$ is smaller
than a suitably chosen
absolute constant (this can be done by suitably increasing the value of
constant $C$).

\textit{Step} 1: \textit{Random selection}.
Denote
$a := B \sqrt{n/s} + M$.
Then
\[
\langle X_i/a, x\rangle\ge1,\qquad i=1,\ldots,s.
\]
The convex hull $K := \conv\{X_i/a, i=1,\ldots,s\}$ is separated
in $\R^n$ from the origin
by the hyperplane $\{ u\dvtx \langle u, x\rangle= 1 \}$.
By a separation argument, one can find a vector $\bar{x} \in\conv(K
\cup0)$, $\|\bar{x}\|_2 =1$
and such that
%
%
\begin{equation} \label{x bar}
\langle X_i/a, \bar{x}\rangle\ge1,\qquad i=1,\ldots,s.
\end{equation}
(Indeed, one chooses $\bar{x} = z/\|z\|_2$ where $z$ is the element of $K$
with the smallest Euclidean norm.)
We express $\bar{x}$ as a convex combination
\[
\bar{x} = \sum_{i=1}^s \lambda_i X_i/a\qquad
\mbox{for some }
\lambda_i \ge0,\qquad
\sum_{i=1}^s \lambda_i \le1.
\]
By Chebyshev's inequality, the set
$E := \{ i \le s \dvtx \lambda_i \le1/\delta s\}$
has cardinality $|E| \ge(1-\delta)s$.
We will perform a random selection on $E$.
Let $\delta_1,\ldots,\delta_s$ be i.i.d. selectors, that is,
independent $\{0,1\}$ valued
random variables with $\E\delta_i = \delta$. We define the random vector
\[
\bar{y} := \sum_{i \in E} \delta_i \lambda_i X_i/a
+ \sum_{i \in E^c} \delta\lambda_i X_i/a\qquad
\mbox{then } \E\bar{y} = \delta\bar{x}.
\]

\textit{Step} 2: \textit{Control of the norm and inner products.}
By independence and by definitions of $a$, $E$ and $B$ we have
\begin{eqnarray*}
\E\|\bar{y} - \delta\bar{x}\|_2^2
&=& \E\biggl\| \sum_{i \in E} (\delta_i-\delta) \lambda_i X_i/a
\biggr\|_2^2
= \sum_{i \in E} \E(\delta_i-\delta)^2 \cdot\lambda_i^2 \frac{\|
X_i\|_2^2}{a^2} \\
&\le& s \delta\cdot(1/\delta s)^2 \frac{K_1^2 n}{(B \sqrt{n/s})^2}
\le\frac{K_1^2}{\delta B^2}
\le0.1 \delta^2.
\end{eqnarray*}
By Chebyshev's inequality, we have with probability at least $0.9$ that
%
%
\begin{equation} \label{norm control}
\|\bar{y}\|_2 \le\|\bar{y} - \delta\bar{x}\|_2 + \|\delta\bar
{x}\|_2
\le2 \delta.
\end{equation}

Now fix $k \in E$. By definition of $\bar{y}$ and by (\ref{x bar}),
we have
%
%
\begin{equation} \label{exp y bar}
\E\langle X_k/a, \bar{y} \rangle
= \delta\langle X_k/a, \bar{x} \rangle
\ge\delta.
\end{equation}
We will need a similar bound with high probability rather than
in expectation. More accurately, we would like to bound below
\[
p_k := \PP\{ \langle(1-\delta_k) X_k/a, \bar{y} \rangle\ge
\delta/2 \}.
\]
Consider the random vector $\bar{y}^{(k)}$ obtained by removing from
the sum defining $\bar{y}$ the term corresponding to $X_k$
\[
\bar{y}^{(k)} := \sum_{i \in E, i \ne k} \delta_i \lambda_i X_i/a
+ \sum_{i \in E^c} \delta\lambda_i X_i/a
= \bar{y} - \delta_k \lambda_k X_k/a.
\]
Then $\bar{y}^{(k)}$ is independent of $\delta_k$, which gives
\[
p_k = \PP\{ \delta_k = 0 \} \cdot
\PP\bigl\{ \bigl\langle X_k/a, \bar{y}^{(k)} \bigr\rangle\ge\delta/2 \bigr\}.
\]
By definitions of $a$, $E$ and $B$ we can bound the contribution of the
removed term as
\[
\langle X_k/a, \lambda_k X_k/a \rangle
= \lambda_k \frac{\|X_k\|_2^2}{a^2}
\le(1/\delta s) \frac{K_1^2 n}{(B \sqrt{n/s})^2}
= \frac{K_1^2}{\delta B^2}
\le0.1 \delta^2.
\]
Then the random variable
$Z_k := \langle X_k/a, \bar{y}^{(k)} \rangle$
satisfies by (\ref{exp y bar}) that
\[
\E Z_k = \E\langle X_k/a, \bar{y} \rangle
- \E\langle X_k/a, \delta_k \lambda_k X_k/a \rangle
\ge\delta- 0.1 \delta^3
\ge0.9 \delta.
\]
Similar to the argument in the beginning of Step 2, we obtain
\begin{eqnarray*}
\Var Z_k
&=& \E(Z_k - \E Z_k)^2
= \E\biggl\langle X_k/a, \sum_{i \in E, i \ne k} (\delta_i-\delta)
\lambda_i X_i/a \biggr\rangle^2 \\
&=& \sum_{i \in E, i \ne k} \E(\delta_i-\delta)^2 \cdot\lambda
_i^2 \frac{\langle X_k, X_i\rangle^2}{a^4} \\
&\le&\delta\cdot\biggl( \frac{1}{\delta s} \biggr)^2 \frac{K_2^4 n
s}{(B \sqrt{n/s} + M)^4}
\le\frac{K_2^4}{\delta B^2 M^2}
\le0.01 \delta^3.
\end{eqnarray*}
By Chebyshev's inequality, we conclude that $\PP\{ Z_k \ge\delta/2
\} \ge1-\delta$.
We have shown that
\[
p_k \ge(1-\delta)(1-\delta) \ge1-2\delta.
\]

\textit{Step} 3: \textit{Decoupling.}
Denoting by $\EE_k$ the event
$\langle(1-\delta_k) X_k/a, \bar{y} \rangle\ge\delta/2$,
we have shown that $\PP(\EE_k) \ge1-2\delta$ for all $k \in E$.
Therefore with probability at least $0.9$,
at least $(1-20\delta)|E|$ of the events $\EE_k$ hold simultaneously.
Indeed, by linearity of expectation we have
\[
\E\sum_{k \in E} \one_{\EE_k^c}
= \sum_{k \in E} \PP(\EE_k^c)
\le2\delta|E|.
\]
By Chebyshev's inequality this yields
\[
\PP\biggl\{ \sum_{k \in E} \one_{\EE_k} \le(1-20\delta) |E| \biggr\}
= \PP\biggl\{ \sum_{k \in E} \one_{\EE_k^c} \ge20 \delta|E| \biggr\}
\le\frac{2 \delta|E|}{20 \delta|E|}
\le\frac{1}{10}.
\]
We have shown that with probability at least $0.9$ the following event occurs:
there exists a subset $I \subset E$, $|I| \ge(1-22\delta)s \ge
(1-22\delta)s$, such that $\EE_k$ holds
for all $k \in I$.

Assume the latter event occurs.
By definition of $\EE_k$ we clearly have $\delta_k=0$ whenever $\EE
_k$ holds.
Hence by definition of $\bar{y}$ one has $\bar{y} \in\Span(X_i)_{i
\in I^c}$.
Also, by definition of $\EE_k$, one has
\[
\langle X_k/a, \bar{y} \rangle\ge\delta/2,\qquad k \in I.
\]
Once we set $y := \bar{y}/\|\bar{y}\|_2$, this and (\ref{norm
control}) complete the proof.
\end{pf}

\section{\texorpdfstring{Norms of random operators $\ell_2 \to\ell_{2, \infty}$}
{Norms of random operators l2 to l2,infinity}}
\label{s: norm}

Recall that the weak $\ell_2$-norm $\|x\|_{2,\infty}$ of a vector $x
= (x_1,\ldots,x_N) \in\R^N$
is defined as the minimal number $M$ for which the nonincreasing
rearrangement $(x^*_k)$
of the sequence $(|x_k|)$ satisfies $x^*_k \le M k^{-1/2}$, $k=1,\ldots,N$.
It is well known that the quasi-norm \mbox{$\|\cdot\|_{2,\infty}$} is
equivalent to a norm on $\R^N$
(see \cite{SW}), and one can easily check that
$c_p \|x\|_p \le\|x\|_{2,\infty} \le\|x\|_2$ for all $p>2$.

Although \mbox{$\|\cdot\|_{2,\infty}$} is not a norm, for linear operators
$A \dvtx\R^n \to\R^N$
we will be interested in the ``norm'' $\|A\|_{\ell_2 \to\ell
_{2,\infty}}$ defined as the minimal
number $M$ such that $\|Ax\|_{2,\infty} \le M \|x\|_2$ for all $x \in
\R^n$.
\begin{theorem} \label{norm}
Consider independent random vectors $X_1,\ldots,X_N$ in $\R^n$
which satisfy (\ref{Xi}) for some $q > 4$.
Then, for every $t \ge1$, the random matrix $A$ whose rows are $X_i$
satisfies the
following with probability at least $1-Ct^{-0.9q}$.
For every index set $I \subseteq\{1,\ldots,N\}$,
one has
\[
\|P_I A\|_{\ell_2 \to\ell_{2,\infty}}
\le C_{K,L,q} \bigl[ \sqrt{n} + t \sqrt{|I|} ( N/|I|
)^{2/q} \bigr],
\]
where $P_I$ is the coordinate projection in $\R^N$ onto $\R^I$.
In particular, one has
\[
\|A\|_{\ell_2 \to\ell_{2,\infty}} \le C_{K,L,q} \bigl( \sqrt{n} + t
\sqrt{N} \bigr).
\]
\end{theorem}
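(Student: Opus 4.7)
The plan is to combine Proposition~\ref{decoupling} with a net-plus-union-bound argument in the spirit of the heuristic discussion.

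I would first reduce to a ``large coefficients'' statement. By the order-statistic characterization of the weak $\ell_2$-norm, $\|P_I A x\|_{2,\infty} > M$ iff there exist $k \ge 1$ and $S \subseteq I$ with $|S| = k$ such that $|\<X_i, x\>| > M/\sqrt{k}$ for all $i \in S$. Denote $M(m, N) := C_{K,L,q}(\sqrt{n} + t\sqrt{m}(N/m)^{2/q})$; since $\sqrt{m}(N/m)^{2/q} = N^{2/q} m^{1/2 - 2/q}$ with $1/2 - 2/q > 0$, the function $M(\cdot, N)$ is non-decreasing in its first argument for $q > 4$. Hence, since $S \subseteq I$ forces $|S| \le |I|$, it suffices to show that with probability $\ge 1 - Ct^{-0.9q}$ no triple $(k, S, x)$ with $S \subseteq \{1, \dots, N\}$, $|S| = k$, and $x \in S^{n-1}$ satisfies $|\<X_i, x\>| > B_k := M(k, N)/\sqrt{k} = C_{K,L,q}(\sqrt{n/k} + t(N/k)^{2/q})$ for every $i \in S$. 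Both the general bound for arbitrary $I$ and the ``in particular'' bound then follow at once.

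I would fix a small absolute $\d = \d(q) \in (0, 0.1)$ with $q(1-\d) \ge 0.9q$, and dispose of $k$ below a threshold $k_0 = k_0(K, L, q)$ using that $B_k > K\sqrt{n} \ge \|X_i\|_2$ makes the event vacuous there. For $k \ge k_0$, I would apply Proposition~\ref{decoupling} to the vectors $(X_i)_{i \in S}$, splitting $B_k = B_0 \sqrt{n/k} + M_0$ with $B_0 = C_{K,L,q}$ and $M_0 = C_{K,L,q}\, t (N/k)^{2/q}$, and taking $C_{K,L,q}$ large enough to absorb the $\d^{-3/2} K$ and $\d^{-1/2} K_2^2/K$ factors required by the proposition. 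The output is a unit vector $y \in \Span(X_j)_{j \in J}$ for some $J \subseteq S$ with $|J| \le \d k$, together with a set $I' \subseteq S \setminus J$ of size $\ge (1-\d)k$ on which $|\<X_i, y\>| \ge B_k/4$. I would then union-bound: over $J \subseteq \{1, \dots, N\}$ (at most $\binom{N}{\lceil \d k \rceil}$ choices); over an $\e$-net $\NN_J$ of $S^{n-1} \cap \Span(X_j)_{j \in J}$ with $\e = B_k/(16 K\sqrt{n})$ and $|\NN_J| \le (3/\e)^{\d k}$ (chosen so that $|\<X_i, y\>| \ge B_k/4$ transfers to $|\<X_i, y_0\>| \ge B_k/8$ at the nearest net point); and over $I' \subseteq \{1, \dots, N\} \setminus J$ (at most $\binom{N}{\lceil (1-\d)k \rceil}$ choices). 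After conditioning on $(X_j)_{j \in J}$, the vectors $(X_i)_{i \in I'}$ remain independent of the fixed $y_0 \in \NN_J$, so by \eqref{Xi} and Markov's inequality,
\[
  \P[|\<X_i, y_0\>| \ge B_k/8] \le (8L/B_k)^q,
\]
independently across $i \in I'$. Substituting $K\sqrt{n}/B_k \le \sqrt{k}/C_{K,L,q}$ (from the first term of $B_k$) and $(L/B_k)^q \le (k/N)^2/(C_{K,L,q} t)^q$ (from the second term) and simplifying, the per-$k$ probability is bounded by a geometric-type series in $k$ that sums to the required $Ct^{-0.9q}$ once $C_{K,L,q}$ is chosen large enough.

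The step I expect to be the main obstacle is verifying the second deterministic hypothesis of Proposition~\ref{decoupling}, namely $\frac{1}{k}\sum_{i \in S,\, i \ne j}\<X_i, X_j\>^2 \le K_2^4 n$ with a constant $K_2$ that does not erode the final bound, since $S$ is chosen adversarially from the realization of $(X_i)$. I would address this by pre-conditioning on an auxiliary event $\EE_\star$ of probability $\ge 1 - C t^{-q}$ bounding $\max_{i \ne j}|\<X_i, X_j\>|$ (via Markov on $|\<X_i, X_j\>|^q$ conditional on $X_j$ together with a union bound over the $O(N^2)$ pairs), and then verifying that the resulting $(N/k)^{2/q}$-scale inflation of $K_2$ is exactly absorbed by the matching $(N/k)^{2/q}$ factor in our choice of $M_0$. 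Balancing these constants is the technical crux of the argument.
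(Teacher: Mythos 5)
Your plan follows the paper's heuristic, and the skeleton (dispose of small $k$, decouple, net, union bound) is the right one. But there are two places where the execution, as written, would not close, and the first one is fatal.

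\textbf{The net is too fine.} You transfer from $y$ to the nearest net point $y_0$ via the crude bound $|\langle X_i, y-y_0\rangle| \le \|X_i\|_2 \|y-y_0\|_2 \le K\sqrt n\,\e$, which forces you to take $\e = B_k/(16K\sqrt n)$ and hence a net of size $(3/\e)^{|J|} = (48K\sqrt n/B_k)^{|J|}$. Since $B_k = C(\sqrt{n/k} + t(N/k)^{2/q})$ can be far below $\sqrt n$, this gives at best $|\NN_J| \lesssim (\sqrt k)^{\d k}$, i.e.\ an extra $k^{\d k/2}$ beyond a fixed-mesh net. Tracking the union bound, the per-$k$ contribution becomes, up to constants,
\[
\Big(\frac{Ck}{N}\Big)^{(1-\d)k}\Big(\frac{CN}{\sqrt k}\Big)^{\d k}\Big(\frac{CL}{t}\Big)^{q(1-\d)k}
= \Big(\frac{k^{1-3\d/2}}{N^{1-2\d}}\Big)^{k}\Big(\frac{C'L}{t}\Big)^{q(1-\d)k},
\]
and for $k$ of order $N$ the first factor behaves like $N^{\d k/2}$, which diverges for $t=O(1)$. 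The paper sidesteps this entirely: it introduces the scalar $D$ (the smallest constant for which the order-statistic bound $|\langle X_{\pi(m)},x\rangle| \le D[B\sqrt{n/m}+M_1(N/m)^{2/q}]$ holds uniformly in $x\in S^{n-1}$ and $m\le N/2$), takes a \emph{fixed-mesh} net with $\b=\sqrt\d/8$, and bounds the error $\langle X_i, y-y_0\rangle$ by applying the very inequality defining $D$ to the unit vector $(y-y_0)/\|y-y_0\|_2$ at level $m=\lceil\d s\rceil$. This only controls the $\lceil\d s\rceil$-th order statistic of the error, i.e.\ it tolerates losing another $\d s$ indices from $I$, but that is harmless and it gives $|\langle X_i,y_0\rangle|\ge R_s/8$ with the \emph{same} scale $R_s$ (which is what makes the constant-mesh net sufficient). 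The whole argument then runs as a bootstrap on the hypothesis $D>1$. Without this device your net cardinality is irreparably too large.

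\textbf{The inner-product hypothesis.} You correctly identify the verification of $\frac1s\sum_{i\ne k}\langle X_i,X_k\rangle^2 \le K_2^4 n$ as delicate, but your proposed fix (precondition on $\max_{i\ne j}|\langle X_i,X_j\rangle|$ via a pairwise union bound) yields $\max|\langle X_i,X_j\rangle| \lesssim t N^{2/q}KL\sqrt n$, hence $K_2^4 \lesssim t^2 K^2L^2 N^{4/q}\,n/n = t^2K^2L^2N^{4/q}$, with no gain from $|S|=s$. The contribution to $M$ would then scale like $tN^{2/q}$ rather than $t(N/s)^{2/q}$, and you would end up proving the weaker bound $\|P_IA\|\lesssim \sqrt n + t\sqrt{|I|}\,N^{2/q}$, losing the factor $|I|^{-2/q}$ the theorem claims. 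The paper's Lemma~\ref{Xi sum squared} gets the sharp $(N/s)^{4/q}$ precisely by controlling the full non-increasing rearrangement of $(|\langle X_i,X_k\rangle|)_{i\ne k}$ via Lemma~\ref{decreasing rearrangement}, not just its maximum, and then summing; that order-statistic refinement is not optional here.

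Both gaps point to the same theme: the paper repeatedly replaces ``worst-case'' bounds (worst inner product, full $\ell_\infty$ control of the net error) by ``$m$-th largest'' bounds, and this is exactly what makes the logarithm-free sample size possible. As written, your proposal would recover the Bourgain-type logarithmic result, not Theorem~\ref{norm}.
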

\begin{remarks*}
1. This theorem is a finite-moment variant of Corollary 3.7 of~\cite
{ALPT}, where a similar result
is proved under the stronger sub-exponential moment assumptions (\ref
{sub-exponential}).
The latter is in turn a strengthening of an inequality of
Bourgain \cite{B} that has some
unnecessary logarithmic terms.

2. The conclusion of Theorem~\ref{norm} can be equivalently stated as follows.
For every subset $I \subseteq\{1,\ldots,N\}$, one has
\[
\biggl\| \sum_{i \in I} X_i \biggr\|_2
\le C_{K,L,q} \bigl[ \sqrt{n|I|} + t |I| ( N/|I| )^{2/q}
\bigr].
\]

3. It seems possible that Theorem~\ref{norm} holds for the spectral
norm $\|A\|_{\ell_2 \to\ell_2}$.
This would imply that Theorem~\ref{norm} holds in the important case $p=2$.
\end{remarks*}

The proof of Theorem~\ref{norm} is based on the decoupling
Proposition~\ref{decoupling}.
So we will first need to verify the assumptions on the vectors (\ref
{Xi deterministic}).
\begin{lemma} \label{decreasing rearrangement}
Let $Z_1, \ldots, Z_N \ge0$ be independent random variables which satisfy
$\E Z_i^q \le B^q$ for some $q > 0$ and some $B$.
Consider the nonincreasing rearrangement $(Z^*_i)$ of $(Z_i)$.
Then, for every $t \ge1$, one has with probability at least $1 - C
t^{-q}/N$ that
%
%
\begin{equation} \label{rearrangement}
Z^*_i \le tB (N/i)^{2/q},\qquad i=1,\ldots,N.
\end{equation}
In particular, for $q > 4$ (\ref{rearrangement}) implies
\[
\frac{1}{s} \sum_{i=1}^s (Z^*_i)^2 \le C_q t^2 B^2 (N/s)^{4/q},\qquad
s=1,\ldots,N.
\]
\end{lemma}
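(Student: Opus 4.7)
The plan is to prove the first statement by a union bound over $i$, where each probability is controlled by a binomial tail estimate. The second statement then follows from a direct computation using integral comparison.

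Set $\tau_i := tB(N/i)^{2/q}$. The key observation is that $\{Z_i^* > \tau_i\}$ coincides with the event that at least $i$ of the variables $Z_j$ exceed the level $\tau_i$. Markov's inequality applied to $Z_j^q$ gives
$$
p_i := \max_j \P\{Z_j > \tau_i\} \le B^q/\tau_i^q = i^2/(t^q N^2),
$$
so the count $|\{j : Z_j > \tau_i\}|$ is stochastically dominated by a binomial random variable with parameters $N$ and $p_i$. Using the elementary estimate $\binom{N}{i} p_i^i \le (eNp_i/i)^i$, I obtain the single-$i$ bound
$$
\P\{Z_i^* > \tau_i\} \le \Bigl(\frac{ei}{t^q N}\Bigr)^i.
$$

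Next I would sum over $i = 1, \ldots, N$. Writing $M := t^q N$, the ratio of consecutive terms of $\sum_i (ei/M)^i$ is at most $e^2(i+1)/M$, so the series is geometric with ratio $\le 1/2$ whenever $i+1 \le M/(2e^2)$. When $t$ is such that $M \ge 2e^2 N$, the whole series is therefore dominated by its first term, yielding the target failure probability $\le 2e/(t^q N)$. In the remaining regime where $t^q$ is bounded by an absolute constant, the conclusion $Ct^{-q}/N$ is trivially satisfied by choosing $C$ sufficiently large, using only $\P(\cdot) \le 1$.

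For the second statement, the first part gives $Z_i^* \le tB(N/i)^{2/q}$ for all $i \le s$, and then
$$
\frac{1}{s}\sum_{i=1}^s (Z_i^*)^2 \le \frac{t^2 B^2 N^{4/q}}{s}\sum_{i=1}^s i^{-4/q}.
$$
Since $q > 4$ forces $4/q < 1$, integral comparison yields $\sum_{i=1}^s i^{-4/q} \le C_q\, s^{1-4/q}$, and substituting produces the claimed bound $C_q t^2 B^2 (N/s)^{4/q}$.

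The main obstacle I foresee is carefully justifying the union bound across the full range of $i$: when $t^q N$ is not substantially larger than $N$, the binomial bound $(ei/(t^q N))^i$ stops decreasing after $i \approx M/e^2$ and even exceeds $1$ past $i \approx M/e$. I expect to handle this either by splitting into regimes and invoking a sharper Chernoff estimate for the large-$i$ range, or by noting that precisely in this problematic regime the target probability $Ct^{-q}/N$ is already rendered trivial by choosing $C$ large enough.
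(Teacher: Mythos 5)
Your approach is essentially the paper's: a union bound over $i$, translating $\{Z_i^* > \text{threshold}\}$ into ``at least $i$ of the $Z_j$ exceed the threshold,'' bounding by a binomial-type tail $\binom{N}{i}p_i^i$, and integral comparison for the second part. The algebra is correct as far as it goes. The one substantive difference is the choice of threshold, and it matters. The paper takes $u_i = tB(eN/i)^{2/q}$ -- with an extra factor $e^{2/q}$ -- so that after the estimate $\binom{N}{i}u_i^{-qi} \le (eu_i^{-q}N/i)^i$ the summand becomes $(t^{-q}i/(eN))^i$; this series is then dominated by its first term $\sim t^{-q}/N$ for \emph{every} $t\ge 1$, with no regime splitting. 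With your threshold $\tau_i = tB(N/i)^{2/q}$ the summand is $(ei/(t^qN))^i$, whose tail (terms near $i=N$) grows like $(e/t^q)^N$ and diverges as soon as $t^q<e$.

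Your fallback for the regime $t^q \lesssim 1$ does not close this gap. ``Choose $C$ so large that $Ct^{-q}/N\ge 1$'' requires $C\gtrsim N$, but $C$ must be an absolute constant. Moreover, no local repair (Chernoff for large $i$ or otherwise) can rescue the literal constant-free threshold at $t=1$, because the statement is genuinely tight to within a constant that your argument cannot afford to lose. For example, take $Z_j = B(1+\delta)$ with probability $(1+\delta)^{-q}$ and $Z_j=0$ otherwise, with $\delta = 1/(qN)$; then $\E Z_j^q = B^q$ exactly, yet $\P\{Z_N^* > B\cdot(N/N)^{2/q}\} = (1+\delta)^{-qN}\to e^{-1}$, which is not $O(1/N)$. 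The correct repair is the paper's: insert the extra $e^{2/q}$ (or any fixed $C_q$) into the threshold from the start, or equivalently prove the bound for $t\ge e^{2/q}$ and absorb the factor into $C$. Once you do that, the geometric-decay estimate you already wrote applies uniformly for all $i\le N$ and you obtain $\P(\mathrm{failure})\le C t^{-q}/N$ directly, with no case analysis. The second part of your proof (integral comparison for $q>4$) is fine.
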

\begin{pf}
By homogeneity, we can assume that $B=1$. Then by Chebyshev's inequality
we have $\PP\{ Z_j > u \} \le u^{-q}$ for every $j \le N$ and $u>0$.
Now, if $Z^*_i > u$ then there exists a set $J \subseteq\{1,\ldots,N\}
$, $|J|=i$
such that $|Z_j| > u$ for all $j \in J$.
Taking union bound over possible choices of the
subsets $J$, using independence and Stirling's approximation, we obtain
for all $i=1,\ldots,N$
\[
\PP\{ Z^*_i > u \}
\le\pmatrix{N\cr i} \Bigl( \max_{j \le N} \PP\{ Z_j > u \} \Bigr)^i
\le\pmatrix{N\cr i} u^{-qi}
\le(e u^{-q} N / i)^i.
\]
Choosing $u = t (eN/i)^{2/q}$ we obtain
$\PP\{ Z^*_i > u \} \le(t^{-q} i / e N)^i$.
Then, for $t \ge1$,
\[
\PP\{ \exists i \le N \dvtx Z^*_i > u \}
\le\sum_{i=1}^N (t^{-q} i/eN)^i
\le C t^{-q}/N.
\]
This easily implies the first part of the lemma.
The second part follows by summation using that $\frac{1}{s} \sum
_{i=1}^s i^{-r} \le C_r s^{-r}$
for $0 < r < 1$; here $r = 4/q$.
\end{pf}
\begin{lemma} \label{Xi sum squared}
Consider independent random vectors $X_1,\ldots,X_N$ in $\R^n$
which satisfy (\ref{Xi}) for some $q > 4$.
Then for every $t \ge1$ the following holds with probability at least
$1 - C t^{-q}$.
For every subset $E \subseteq\{1,\ldots,N\}$ and every $k \le N$ one has
\[
\frac{1}{|E|} \sum_{i \in E, i \ne k} \langle X_i, X_k\rangle^2
\le C_q t^2 K^2 L^2 ( N/|E| )^{4/q} n.
\]
\end{lemma}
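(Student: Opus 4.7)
My plan is to reduce the lemma to a conditional application of Lemma~\ref{decreasing rearrangement} for each fixed index $k$, and then take a union bound over $k$. The key observation is that, conditionally on $X_k$, the random variables $W_i := \langle X_i, X_k\rangle$ for $i \ne k$ are independent; and because $\|X_k\|_2 \le K\sqrt{n}$ almost surely, their moments can be controlled by the moment assumption in \eqref{Xi} applied to the unit vector $X_k/\|X_k\|_2$.

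First I would fix $k$ and condition on $X_k$. Writing $x = X_k/\|X_k\|_2 \in S^{n-1}$ and using \eqref{Xi}, we get
$$
\E\bigl[|W_i|^q \,\big|\, X_k\bigr] = \|X_k\|_2^q \,\E\bigl[|\langle X_i, x\rangle|^q\bigr] \le (KL\sqrt{n})^q \quad \text{a.s.}
$$
So the variables $Z_i := |W_i|$ for $i \ne k$ satisfy the hypothesis of Lemma~\ref{decreasing rearrangement} with $B = KL\sqrt{n}$ (conditional on $X_k$), on an index set of size $N-1$. Since $q > 4$, the second part of Lemma~\ref{decreasing rearrangement} yields that, with conditional probability at least $1 - Ct^{-q}/N$,
$$
\frac{1}{s}\sum_{i=1}^{s} (Z^*_i)^2 \le C_q\, t^2 K^2 L^2 n \,(N/s)^{4/q}, \qquad s=1,\ldots,N-1.
$$
Integrating out $X_k$, this holds unconditionally with the same probability.

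Next, for any subset $E \subseteq \{1,\ldots,N\}$ of size $s = |E|$, the sum of any $s$ terms from $(Z_i^2)_{i \ne k}$ is bounded by the sum of the $s$ largest, and squaring preserves the non-increasing rearrangement, so
$$
\frac{1}{|E|}\sum_{i \in E,\; i \ne k} \langle X_i, X_k\rangle^2 \;=\; \frac{1}{s}\sum_{i \in E,\; i \ne k} Z_i^2 \;\le\; \frac{1}{s}\sum_{i=1}^{s} (Z^*_i)^2 \;\le\; C_q\, t^2 K^2 L^2 n \,(N/|E|)^{4/q}.
$$
This gives the desired bound for the fixed $k$ on an event of probability at least $1-Ct^{-q}/N$, uniformly in $E$.

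Finally, taking the union bound over $k=1,\ldots,N$, the failure probability is at most $N \cdot C t^{-q}/N = Ct^{-q}$, which matches the claim. The main step that requires genuine care is the moment bound on $W_i$, where we need the almost-sure boundedness of $\|X_k\|_2$ to turn the marginal moment hypothesis on unit vectors into a deterministic pointwise bound on the conditional $q$-th moment; without that, one would get only an $L^q(d\P)$-bound and could not directly feed the result into Lemma~\ref{decreasing rearrangement} at the conditional level. Everything else is bookkeeping.
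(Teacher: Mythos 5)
Your proof is essentially the same as the paper's: fix $k$, apply Lemma~\ref{decreasing rearrangement} to $Z_i = |\langle X_i, X_k\rangle|$, and take a union bound over $k$. You are in fact a bit more careful than the paper's two-line proof in one spot: you explicitly condition on $X_k$ so that the $Z_i$ are genuinely independent (which Lemma~\ref{decreasing rearrangement} requires) and so that the a.s.\ bound $\|X_k\|_2 \le K\sqrt n$ gives the needed deterministic bound on the conditional $q$-th moment --- the paper states only the unconditional moment bound and leaves the conditioning implicit.
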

\begin{pf}
We fix $k \le N$ and apply Lemma~\ref{decreasing rearrangement} to the random
variables $Z^{(k)}_i := |\langle X_i, X_k\rangle|$, $i \le N$, $i \ne k$.
By assumptions (\ref{Xi}), we have $\E Z_i^q \le(KL\sqrt{n})^q$.
Then with probability at least $1 - C t^{-q}/N$, we have
\[
\frac{1}{s} \sum_{i=1}^s \bigl(\bigl(Z^{(k)}\bigr)^*_i\bigr)^2
\le C_q t^2 K^2 L^2 (N/s)^{4/q} n,\qquad s=1,\ldots,N.
\]
Taking union bound over $k \le N$ completes the proof.
\end{pf}
\begin{pf*}{Proof of Theorem~\ref{norm}}
By homogeneity, we can assume that $L=1$.
Also, by decomposing $I$ in three sets of roughly equal cardinality
we see that it suffices to prove the conclusion
for the subsets $I$ of cardinality $|I| \le N/2$.

Denote by $\EE$ the event in the conclusion of Lemma~\ref{Xi sum squared}.
If $\EE$ holds, then the assumptions (\ref{Xi deterministic}) of
decoupling Proposition~\ref{decoupling}
are satisfied for every $s$ and every subset $(X_i)_{i \in E}$,
$E \subseteq\{1,\ldots,N\}$, $|E|=s$, and with parameters
$K_1 = K$, $K_2^4 = C_q t^2 K^2 (N/s)^{4/q}$.
So, in view of application of decoupling Proposition~\ref{decoupling},
we consider
$B = B(K,\delta)$ and $M_1 = M_1(q,\delta, t)$ defined as
\[
B := C \delta^{-3/2} K_1,\qquad
M = C \delta^{-1} K_2^2/K_1
= C_q' \delta^{-1} t (N/s)^{2/q}
=: M_1 (N/s)^{2/q}.
\]
Note that we can assume that $C_q' \ge8$, which we will use later.

We will now need a convenient interpretation of the conclusion of the theorem.
Given $x \in S^{n-1}$, we denote by $|\langle X_{\pi(i)}, x\rangle|$
a nonincreasing rearrangement of the sequence $|\langle X_i, x\rangle
|$, $i=1,\ldots,N$.
Denote by $D$ the minimal number such that for every $x \in S^{n-1}$
and every $s \le N/2$ one has
\[
\bigl|\bigl\langle X_{\pi(s)}, x\bigr\rangle\bigr| \le R_s := D \bigl[ B \sqrt{n/s} +
M_1 (N/s)^{2/q} \bigr].
\]
Since $q \ge4$, the quantity $\sqrt{s}(N/s)^{2/q}$ is nondecreasing
in $s$.
Therefore one has for every $s \le m \le N/2$
\[
\bigl|\bigl\langle X_{\pi(s)}, x\bigr\rangle\bigr| \le D \bigl[ B \sqrt{n/s} + M_1 \sqrt
{m/s} (N/m)^{2/q} \bigr].
\]
It follows that for every $x \in S^{n-1}$, every $m \le N/2$,
and every index set $I \subseteq\{1,\ldots,N\}$, $|I| = m$, one has
\[
\|( \langle X_i, x\rangle)_{i \in I}\|_{2,\infty}
\le D \bigl[ B \sqrt{n} + M_1 \sqrt{m} (N/m)^{2/q} \bigr].
\]
If we are able to show that $D \le1$ with the high probability as
required in Theorem~\ref{norm},
this would clearly complete the proof.

Since the event $\EE$ holds with probability at least $1 - C t^{-q}$,
it suffices to show that the event
$\{ \EE\mbox{ and } D > 1\}$ occurs with probability at most $C
t^{-0.99 q}$.
Let us assume that the latter event does occur.
By definition of $D$, one can find an integer $s \le N/2$, a subset $E
\subseteq\{1,\ldots,N\}$, $|E|=s$
and a vector $x \in S^{n-1}$ such that
\[
|\langle X_i, x\rangle| \ge R_s, \qquad i \in E.
\]
By the definition of $R_s, B, M$ above, decoupling Proposition \ref
{decoupling} can be applied
for $(X_i)_{i \in E}$, and it yields the following. There exists a
decomposition $E = I \cup J$
into disjoint sets $I$ and $J$ such that $|I| \ge(1-\delta)s$, $|J|
\le\delta s$, and there exists a vector
$y \in\Span(X_j)_{j \in J}$, $\|y\|_2 = 1$, such that
%
%
\begin{equation} \label{Xi y}
|\langle X_i, y\rangle| \ge R_s/4,\qquad i \in I.
\end{equation}

Let $\beta= \beta(\delta) \ge0$ be a sufficiently small quantity to
be determined later.
Consider a $\beta$-net $\NN_J$ of the sphere $S^{n-1} \cap\Span
(X_j)_{j \in J}$.
As in known by volumetric argument (see, e.g., \cite{MS}, Lemma 2.6),
one can choose such a net
with cardinality
\[
|\NN_J| \le(3/\beta)^{|J|}.
\]
We can assume that the random set $\NN_J$ depends only on $\beta$ and
the random variables $(X_j)_{j \in J}$. There exists $y_0 \in\NN_J$
such that
$\|y-y_0\|_2 \le\beta$. By definition of $D$, this implies that
\[
\bigl|\bigl\langle X_{\pi(\lceil\delta s \rceil)}, y-y_0 \bigr\rangle\bigr|
\le R_{\lceil\delta s \rceil} \cdot\beta
\le R_{\delta s} \cdot\beta
\le\bigl(R_s / \sqrt{\delta}\bigr) \beta
= R_s/8,
\]
if we choose $\beta= \sqrt{\delta}/8$.
This means that all but at most $\delta s$ indices $i$ in $I$
satisfy the inequality $|\langle X_i, y-y_0 \rangle| \le R_s/8$, and therefore
[by (\ref{Xi y})] also the inequality $|\langle X_i, y_0 \rangle| \ge R_s/8$.
Let us denote the set of these coefficients by $I_0$.
Note that
\begin{eqnarray*}
R_s/8
&\ge&\frac{1}{8} M_1 (N/s)^{2/q}\qquad
\mbox{(by definition of $R_s$ and since $D > 1$)} \\
&\ge&\frac{C'_q}{8} (t/\delta) (N/s)^{2/q}\qquad
\mbox{(by definition of $M_1$)} \\
&\ge&(t/\delta) (N/s)^{2/q}\qquad
\mbox{(since $C'_q \ge8$)}.
\end{eqnarray*}

Summarizing, we have shown that the event $\{ \EE\mbox{ and } D > 1\}$
implies the following event that we call $\EE_0$:
there exist an integer $s \le N/2$, disjoint index subsets
$I_0 = I_0(s), J = J(s) \subseteq\{1,\ldots,N\}$ with cardinalities
$|I_0| \ge(1-2\delta)s$, $|J| \le\delta s$, and a vector $y_0 \in
\NN_J$ such that
\[
|\langle X_i, y_0\rangle| \ge(t/\delta) (N/s)^{2/q},\qquad i \in I_0.
\]
Note that by Chebyshev's inequality and independence, for a fixed $y_0
\in S^{n-1}$ and a
fixed set $I_0 \subset\{1,\ldots,N\}$ as above, one has
%
%
\begin{eqnarray} \label{probability for fixed}
\PP\{ |\langle X_i, y_0\rangle| \ge(t/\delta) (N/s)^{2/q},
i \in I_0 \}
&\le&\bigl( (t/\delta) (N/s)^{2/q} \bigr)^{-q|I_0|}
\nonumber\\[-8pt]\\[-8pt]
&=&\bigl( (\delta/t)^q (s/N)^2 \bigr)^{|I_0|}.\nonumber
\end{eqnarray}
Then we can bound the probability of $\EE_0$ by taking the union bound
over all $s, I_0, J$ as above, conditioning on the random variables
$(X_j)_{j \in J}$ (which fixes the net $\NN_J$), taking the union bound
over $y_0 \in\NN_J$, and finally evaluating the probability
using~(\ref{probability for fixed}). This yields
\[
\PP(\EE_0)
\le\sum_{s=1}^{N/2} \pmatrix{N\cr|I_0|} \pmatrix{N\cr|J|}
|\NN_J| \bigl( (\delta/t)^q (s/N)^2 \bigr)^{|I_0|}
\]
(recall that $I_0$ and $J$ in this sum may depend on $s$).
Also recall that with our choice $\beta= \sqrt{\delta}/8$, we have
$|\NN_J| \le(24/\sqrt{\delta})^{|J|}$.
Further, by our choice of $M_1$ we have $R_s/8 \ge\delta^{-1} t
(N/s)^{2/q}$.
Using Stirling's approximation, we obtain
\[
\PP(\EE_0)
\le\sum_{s=1}^{N/2} \biggl( \frac{eN}{|I_0|} \biggl( \frac{\delta
}{t} \biggr)^q \biggl( \frac{s}{N} \biggr)^2 \biggr)^{|I_0|}
\biggl( \frac{eN}{|J|} \cdot\frac{24}{\sqrt{\delta}} \biggr)^{|J|}.
\]
Estimating $s$ in the summand by $2|I_0|$ and using the inequalities
$|I_0| \ge(1-2\delta)s$ and $|J| \le\delta s$ along with monotonicity,
we conclude for a sufficiently small $\delta$ that
\[
\PP(\EE_0)
\le\sum_{s=1}^{N/2} \biggl( C \biggl( \frac{\delta}{t} \biggr)^q \frac
{s}{N} \biggr)^{(1-2\delta)s}
\biggl( \frac{CN}{\delta^{3/2} s} \biggr)^{\delta s}
\le\sum_{s=1}^{N/2} \biggl( \frac{t^{-q} s}{10N} \biggr)^{(1-3\delta)s}
\le t^{-0.9 q} N^{-0.9}.
\]
This completes the proof of Theorem~\ref{norm}.
\end{pf*}

\section{\texorpdfstring{Approximation of marginals and the $\ell_2 \to\ell_p$ norms of random operators}
{Approximation of marginals and the l2 to lp norms of random operators}} \label{s: sampling}

In this section we deduce from Theorem~\ref{norm} the main results of
this paper,
Theorems~\ref{sampling} and~\ref{norm p>2}.
The method of this deduction is by now standard; it was used in
particular in \cite{ALPT}.
It consists of an application of symmetrization, truncation, and
contraction principle,
and it reduces the problem to estimating the contribution to the sum of
large coefficients.

Specifically, given a threshold $B \ge0$ and a vector $x \in S^n$, we define
the set of large coefficients with respect to random vectors
$X_1,\ldots, X_N$ as
\[
E_B = E_B(x) = \{i \le N\dvtx |\langle X_i, x\rangle| \ge B\}.
\]
The truncation argument in the beginning of proof of Proposition 4.4 in
\cite{ALPT} yields
the following bound:
\begin{lemma}[(Reduction to the few large coefficients)] \label
{reduction to large}
Let $p \ge2$, $B \ge0$, $t \ge1$.
Consider independent random vectors $X_i$ in $\R^n$
which satisfy (\ref{Xi moment}) for $q=2p$.
Then for every positive integer $N$, with probability at least
\[
1 - \exp\bigl( -c \min\bigl(t^2 n B^{2p-2}, t \sqrt{Nn}/B\bigr) \bigr)
\]
one has
%
%
\begin{eqnarray} \label{three terms}
&&\sup_{x \in S^{n-1}} \Biggl| \frac{1}{N} \sum_{i=1}^N |\langle X_i,
x\rangle|^p - \E|\langle X_i, x\rangle|^p \Biggr|\nonumber\\
&&\qquad\le16 t B^{p-1} \sqrt{\frac{n}{N}}
+ \sup_{x \in S^{n-1}} \frac{1}{N} \sum_{i \in E_B(x)} |\langle X_i,
x\rangle|^p\\
&&\qquad\quad{} + \sup_{x \in S^{n-1}} \E\frac{1}{N} \sum_{i \in E_B(x)} |\langle
X_i, x\rangle|^p,
\nonumber
\end{eqnarray}
where $c=c_{p,K}>0$ depends only on $p$ and the parameter $K$
in the moment assumption (\ref{Xi moment}).
\end{lemma}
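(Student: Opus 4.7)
The plan is to follow the standard truncation--symmetrization--contraction--concentration pipeline pointed to in \cite{ALPT}. First I would fix $B$ and split the function $t \mapsto |t|^p$ as $\phi_B(t) + \psi_B(t)$, where $\phi_B(t) = \min(|t|^p, B^p)$ is $pB^{p-1}$-Lipschitz and vanishes at $0$, and $\psi_B(t) = (|t|^p - B^p)_+$ is supported on $\{|t| > B\}$. Since $\psi_B(\< X_i, x\> )$ is supported on $i \in E_B(x)$ and bounded there by $|\< X_i,x\> |^p$, splitting $|\< X_i,x\> |^p - \E|\< X_i,x\> |^p$ into its centered $\phi_B$-part and centered $\psi_B$-part and using the triangle inequality gives
\[
\sup_x \Big|\frac{1}{N}\sum_i (|\< X_i,x\> |^p - \E|\< X_i,x\> |^p)\Big|
\le \sup_x \Big|\frac{1}{N}\sum_i (\phi_B(\< X_i,x\> ) - \E\phi_B(\< X_i,x\> ))\Big|
+ \sup_x \frac{1}{N}\sum_{i \in E_B(x)}|\< X_i,x\> |^p
+ \sup_x \E\frac{1}{N}\sum_{i \in E_B(x)}|\< X_i,x\> |^p.
\]
The last two terms on the right are precisely the ones appearing in \eqref{three terms}, so it suffices to bound the first by $16 t B^{p-1}\sqrt{n/N}$ with the claimed probability.

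For that I would first control its expectation. Standard Rademacher symmetrization bounds it by $\frac{2}{N}\E\sup_x |\sum_i \epsilon_i \phi_B(\< X_i,x\> )|$. The Rademacher contraction principle, applicable since $\phi_B$ is $pB^{p-1}$-Lipschitz with $\phi_B(0)=0$, reduces this further to $\frac{2pB^{p-1}}{N}\E\|\sum_i \epsilon_i X_i\|_2$. By Jensen and the bound $\E\|X_i\|_2^2 \le L^2 n$, which follows from \eqref{Xi moment} via the Minkowski argument recalled at \eqref{exp norm}, one has $\E\|\sum_i \epsilon_i X_i\|_2 \le L\sqrt{nN}$, so the expected deviation is at most $C_{p,L}\, B^{p-1}\sqrt{n/N}$.

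To upgrade this to a bound in probability I would apply Talagrand's concentration inequality for suprema of bounded empirical processes to the centered class $\{\phi_B(\< \cdot,x\> ) - \E\phi_B(\< X, x\> ) : x \in S^{n-1}\}$. The relevant parameters are the uniform bound $\|\phi_B\|_\infty = B^p$ and the weak variance $\sup_x \E\phi_B(\< X,x\> )^2 \le L^2 B^{2(p-1)}$, the latter using the pointwise estimate $\phi_B(t)^2 \le B^{2(p-1)} t^2$ combined with $\E\< X,x\> ^2 \le L^2$. Talagrand's inequality then produces a Bernstein-type tail; choosing the deviation parameter proportional to $tB^{p-1}\sqrt{nN}$ and tracking the $B$-dependence in the sub-Gaussian regime (variance-dominated) and in the sub-exponential regime (uniform-bound-dominated) yields the advertised probability $\exp(-c\min(t^2 nB^{2p-2}, t\sqrt{Nn}/B))$.

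The main obstacle is the bookkeeping in the concentration step: one has to choose the truncation $\phi_B$ so that the tail $\psi_B$ splits cleanly into the two $E_B(x)$-sums and the variance in Talagrand's inequality comes out as $L^2 B^{2(p-1)}$ (rather than a worse dependence on $B$), while simultaneously recovering the precise mixed sub-Gaussian/sub-exponential form of the probability bound. The remaining pieces---symmetrization, contraction, and the expected Rademacher norm---are routine given the moment assumption \eqref{Xi moment}.
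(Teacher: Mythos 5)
The paper does not give a self-contained proof of this lemma; it simply cites ``the truncation argument in the beginning of proof of Proposition~4.4 in \cite{ALPT}''. Your reconstruction follows the intended pipeline: truncate $|t|^p$ at level $B$, split the centered empirical process into a Lipschitz part and a tail part, bound the Lipschitz part by symmetrization, contraction, and Talagrand concentration, and identify the tail part with the two $E_B(x)$-sums. This is the right architecture, and your decomposition $\phi_B(t)=\min(|t|^p,B^p)$, $\psi_B(t)=(|t|^p-B^p)_+$ with $\psi_B(\<X_i,x\>)\le |\<X_i,x\>|^p\,\one_{E_B(x)}(i)$ cleanly produces the last two terms of \eqref{three terms}.

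There is, however, a concrete gap in the concentration step that keeps you from reaching the stated probability. You bound the weak variance by $\sup_x \E\,\phi_B(\<X,x\>)^2 \le L^2 B^{2(p-1)}$, via $\phi_B(t)^2 \le B^{2(p-1)}t^2$ and $\E\<X,x\>^2 \le L^2$. Feeding this into Talagrand's inequality with deviation $u\sim tB^{p-1}\sqrt{nN}$ and uniform bound $b=B^p$ gives a sub-Gaussian exponent $u^2/\sigma^2 \sim t^2 n$ and a sub-exponential exponent $u/b \sim t\sqrt{nN}/B$, i.e.\ a tail $\exp(-c\min(t^2 n,\, t\sqrt{nN}/B))$. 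The factor $B^{2p-2}$ in the first exponent of the lemma is lost, so what you prove is strictly weaker whenever $B\ge 1$ --- which is exactly the regime used downstream in Proposition~\ref{sampling sharp}, where $B=t(\e N/n)^{2/(q-4)}$ and the term $t^2nB^{2p-2}=t^{2p}\e N$ is tracked explicitly. The fix is to use the sharper pointwise bound $\phi_B(t)^2 \le |t|^{2p}$ (immediate from $\phi_B(t)\le|t|^p$), which together with the hypothesis \eqref{Xi moment} for $q=2p$ yields $\sup_x\E\,\phi_B(\<X,x\>)^2 \le L^{2p}$, \emph{independent of $B$}. This is precisely why the lemma assumes $2p$ moments rather than only two; your bound never uses the full strength of the hypothesis. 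With $\sigma^2\le NL^{2p}$, one gets $u^2/\sigma^2 \sim t^2 nB^{2p-2}/L^{2p}$, and the stated bound follows. (Two minor notes: the numerical constant $16$ is only compatible with your expectation estimate $\sim pLB^{p-1}\sqrt{n/N}$ once the $p,L$-dependence is absorbed into $c$, and the ``$K$'' in the statement's $c_{p,K}$ should read $L$, since \eqref{Xi moment} involves only $L$.)
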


This lemma reduces the approximation problem in Theorem~\ref{sampling}
to finding an upper bound on the contribution of the large coefficients
$\frac{1}{N} \sum_{i \in E_B(x)} |\langle X_i,\break x\rangle|^p$.
In the following lemma, we observe that a slightly stronger bound
(for the \mbox{$\|\cdot\|_{2,\infty}$} norm rather than \mbox{$\|\cdot\|_p$} norm)
follows from Theorem~\ref{norm}.
To facilitate the notation, throughout the end of this section we will
write $a \lesssim b$ if
$a \le C_{K,L,p,q,\delta} b$.
\begin{lemma}[(Large coefficients)] \label{large coefficients}
Let $q > 4$, $t \ge1$, $\varepsilon\in(0,1)$ and $B \ge t
(\varepsilon N/n)^{2/(q-4)}$.
Consider independent random vectors $X_1,\ldots,X_N$ in $\R^n$
which satisfy (\ref{Xi}).
Then with probability at least $1 - C t^{-0.9q}$, one has for every $x
\in S^{n-1}$
\[
|E_B| \lesssim t^2 n / \varepsilon B^2,\qquad
\|(\langle X_i, x\rangle)_{i \in E_B}\|_{2,\infty} \lesssim t \sqrt
{n/\varepsilon}.
\]
\end{lemma}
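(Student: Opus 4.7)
The plan is to derive both bounds by combining Theorem~\ref{norm} with an elementary lower bound on the weak $\ell_2$-norm of a vector whose coordinates exceed $B$. All randomness gets absorbed up front: I first invoke Theorem~\ref{norm} with the given parameter $t$, which holds with probability at least $1 - Ct^{-0.9q}$ and yields the uniform estimate $\|P_I A\|_{\ell_2 \to \ell_{2,\infty}} \lesssim \sqrt{n} + t\sqrt{|I|}\,(N/|I|)^{2/q}$ simultaneously over all index sets $I \subseteq \{1,\ldots,N\}$. From here everything is deterministic, and uniformity over $x \in S^{n-1}$ is free because the operator norm on the left does not depend on $x$.

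For the cardinality bound, I fix $x \in S^{n-1}$, set $I = E_B(x)$, and observe that the vector $P_I Ax$ has exactly $|E_B|$ nonzero coordinates, each of magnitude at least $B$. The defining property of $\|\cdot\|_{2,\infty}$ then gives the lower bound $\|P_I Ax\|_{2,\infty} \ge B\sqrt{|E_B|}$, and combining with the upper bound from Theorem~\ref{norm} produces
$$B\sqrt{|E_B|} \lesssim \sqrt{n} + t\sqrt{|E_B|}\,(N/|E_B|)^{2/q}.$$
I then solve for $|E_B|$ by splitting according to which term dominates on the right. In the regime where $\sqrt{n}$ dominates, one gets $|E_B| \lesssim n/B^2$, which is stronger than the target since $t \ge 1$ and $\e \le 1$. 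In the regime where the second term dominates, one gets $B \lesssim t(N/|E_B|)^{2/q}$, i.e.\ $|E_B| \lesssim t^{q/2} N / B^{q/2}$. The main obstacle of the proof is here: one needs the threshold hypothesis $B \ge t(\e N/n)^{2/(q-4)}$ to be exactly calibrated so that this second bound is also controlled by $t^2 n/(\e B^2)$. A short computation raising $B$ to the power $(q-4)/2$ confirms the two are consistent, and in fact the hypothesis on $B$ is precisely what is required.

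For the weak $\ell_2$-norm bound, I use the upper estimate from Theorem~\ref{norm} directly:
$$\|(\<X_i,x\>)_{i\in E_B}\|_{2,\infty} \le \|P_{E_B} A\|_{\ell_2 \to \ell_{2,\infty}} \lesssim \sqrt{n} + t\sqrt{|E_B|}\,(N/|E_B|)^{2/q}.$$
The first summand is trivially bounded by $t\sqrt{n/\e}$. For the second, substitute the bound $|E_B| \lesssim t^2 n/(\e B^2)$ from the previous step together with $B \ge t(\e N/n)^{2/(q-4)}$; rearranging gives $|E_B| \lesssim (n/\e)^{q/(q-4)}/N^{4/(q-4)}$, and raising this to the exponent $(q-4)/(2q)$ produces
$$t\sqrt{|E_B|}\,(N/|E_B|)^{2/q} = tN^{2/q} |E_B|^{(q-4)/(2q)} \lesssim tN^{2/q} \cdot \frac{(n/\e)^{1/2}}{N^{2/q}} = t\sqrt{n/\e},$$
where the $N$-dependence cancels exactly. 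This completes both conclusions of the lemma, and once again the estimates are uniform in $x$ because Theorem~\ref{norm} gave the operator norm bound before $x$ was introduced.
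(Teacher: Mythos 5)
Your argument is correct and follows exactly the same route as the paper's own proof: invoke Theorem~\ref{norm} (uniform over index sets $I$, hence over $x$ via $I=E_B(x)$), sandwich $B^2|E_B|\le\|(\langle X_i,x\rangle)_{i\in E_B}\|_{2,\infty}^2\lesssim n+t^2|E_B|(N/|E_B|)^{4/q}$, solve for $|E_B|$ by splitting on the dominant term and using the calibration $B\ge t(\e N/n)^{2/(q-4)}$, and then substitute back to bound the weak-$\ell_2$ norm. The only difference is that you have spelled out the exponent bookkeeping explicitly; the substance is identical.
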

\begin{pf}
By definition of the set $E_B$ and the norm \mbox{$\|\cdot\|_{2,\infty}$}
and using Theorem~\ref{norm},
we obtain with the required probability
%
%
\begin{equation} \label{BEB}
B^2 |E_B|
\le\|(\langle X_i, x\rangle)_{i \in E_B}\|_{2,\infty}^2
\lesssim n + t^2 |E_B| ( N/|E_B| )^{4/q}.
\end{equation}
It follows that $|E_B| \lesssim n/B^2 + N(t/B)^{q/2}$.
This and the assumption on $B$ implies that $|E_B| \lesssim t^2 n /
\varepsilon B^2$ as required.
Substituting this estimate into the second inequality in (\ref{BEB}),
we complete the proof.
\end{pf}
\begin{proposition}[(Deviation)] \label{sampling sharp}
Let $p>2$, $\varepsilon\in(0,1)$, $\delta>0$ and $N \ge
n/\varepsilon+ C$
where $C = C_{p,K,\delta}$ is suitably large.
Consider independent random vectors $X_i$ in $\R^n$
which satisfy (\ref{Xi}) for $q=4p$.
Then with probability at least $1 - \delta$ one has
%
%
\begin{equation} \label{eq sampling sharp}
\sup_{x \in S^{n-1}} \Biggl| \frac{1}{N} \sum_{i=1}^N |\langle X_i,
x\rangle|^p - \E|\langle X_i, x\rangle|^p \Biggr|
\lesssim\varepsilon^{1/2} + \frac{(n/\varepsilon)^{p/2}}{N} +
\biggl( \frac{n}{\varepsilon N} \biggr)^{3/2}.\hspace*{-28pt}
\end{equation}
\end{proposition}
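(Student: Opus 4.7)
The plan is to combine Lemma~\ref{reduction to large} with Lemma~\ref{large coefficients}, choosing the truncation level $B$ so that the three error terms of \eqref{three terms} match the three terms on the right-hand side of \eqref{eq sampling sharp}. Writing $\lesssim$ for $\le C_{K,L,p,\d}$, the plan is to fix a large constant $t \ge 1$ depending only on $p,\d$ and to set
$$
B := t\,(\e N/n)^{1/(2p-2)}.
$$
This is exactly the threshold required by the hypothesis of Lemma~\ref{large coefficients} (the exponent $2/(q-4)$ equals $1/(2p-2)$ for $q = 4p$), and it is calibrated so that $B^{p-1}\sqrt{n/N} = t^{p-1}\sqrt{\e}$.

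Applying Lemma~\ref{reduction to large} with this $B$ reduces the left-hand side of \eqref{eq sampling sharp} to the three summands of \eqref{three terms}. The first becomes $16 t B^{p-1}\sqrt{n/N} = 16 t^p \sqrt{\e} \lesssim \e^{1/2}$. The failure probability is
$$
\exp\!\big(-c\min(t^2 n B^{2p-2},\, t\sqrt{Nn}/B)\big)
= \exp\!\big(-c\min(t^{2p}\e N,\ \sqrt{Nn}\,(n/\e N)^{1/(2p-2)})\big),
$$
and I would force it below $\d/2$ by taking the additive constant in the hypothesis $N \ge n/\e + C$ large enough (for $p > 2$, both quantities are increasing and unbounded in $N$).

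For the second (random) summand of \eqref{three terms}, Lemma~\ref{large coefficients} applied with $q = 4p$ yields, with probability at least $1 - Ct^{-3.6p}$, the uniform bound $\|(\<X_i,x\>)_{i \in E_B(x)}\|_{2,\infty} \lesssim t\sqrt{n/\e}$ for every $x \in S^{n-1}$. Since $p > 2$, the inequality $c_p\|z\|_p \le \|z\|_{2,\infty}$ recalled at the start of Section~\ref{s: norm} converts this into
$$
\frac{1}{N}\sum_{i \in E_B(x)} |\<X_i,x\>|^p
\le \frac{c_p^{-p}}{N}\,\|(\<X_i,x\>)_{i \in E_B(x)}\|_{2,\infty}^p
\lesssim \frac{t^p\,(n/\e)^{p/2}}{N},
$$
matching the second target term. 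Choosing $t$ a large enough constant makes the failure probability of this step at most $\d/2$ as well.

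For the third (deterministic expectation) summand, for each fixed $x$ and each $i$, H\"older's inequality and the $q = 4p$ moment assumption give
$$
\E |\<X_i, x\>|^p\,\one_{|\<X_i,x\>| \ge B}
\le B^{p-q}\,\E|\<X_i,x\>|^q
\le L^{4p}\,t^{-3p}\,(n/\e N)^{3p/(2p-2)}.
$$
For $p > 2$ the exponent satisfies $3p/(2p-2) > 3/2$, and the hypothesis $N \ge n/\e$ forces $n/(\e N) \le 1$, so this is $\lesssim (n/\e N)^{3/2}$. Summing the three contributions yields \eqref{eq sampling sharp} with probability at least $1 - \d$. The only delicate point is the calibrated choice of $B$ that balances all three target terms simultaneously; no further obstacle is anticipated, since both invoked lemmas already supply quantitative tails and the deterministic summand is controlled by a one-line moment computation.
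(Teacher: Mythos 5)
Your proposal is correct and follows essentially the same route as the paper: the same threshold $B = t(\e N/n)^{1/(2p-2)}$, the same use of Lemma~\ref{reduction to large} followed by Lemma~\ref{large coefficients} via the $\|\cdot\|_p \lesssim \|\cdot\|_{2,\infty}$ inequality for the second term, and the same H\"older-style truncated-moment bound for the third term. The only cosmetic difference is that the paper writes out the explicit lower bounds $t^2 n B^{2p-2} = t^{2p}\e N \ge t^{2p}$ and $t\sqrt{Nn}/B \ge N^{(p-2)/(2p-2)} \ge C^{(p-2)/(2p-2)}$ to justify the probability in Lemma~\ref{reduction to large}, where you state the same conclusion a bit more loosely.
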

\begin{remarks*}
1. Theorem~\ref{sampling} follows immediately from this result.

2. One could of course optimize the right-hand side in $\varepsilon$;
we did not do this in order
to make clear where the three terms come from.
\end{remarks*}
\begin{pf*}{Proof of Proposition~\ref{sampling sharp}}
We choose $B := t (\varepsilon N/n)^{2/(q-4)}$ so that Lemma \ref
{large coefficients} holds.\vadjust{\goodbreak}

Next, we choose $t = t(\delta,K)$ and $C = C_{p,K,\delta}$
sufficiently large so that the probabilities in
Lemmas~\ref{reduction to large} and~\ref{large coefficients}
are at least $1-\delta/2$ each. This is indeed possible for the probability
in Lemma~\ref{reduction to large}
as one can check that $t^2 n B^{2p-2} = t^{2p} \varepsilon N \ge
t^{2p}$ and
$t\sqrt{Nn}/B \ge N^{1/2 - 2/(q-4)} \ge C^{(p-2)/2(p-1)}$;
for the probability in Lemma~\ref{large coefficients} this is straightforward.

Let us assume that the conclusions of both these lemmas hold; as we now know
this holds with probability at least $1-\delta$. Our goal is to
estimate the three
terms in the right-hand side of (\ref{three terms}).

By our choice of $B$, the first term in the right-hand side of (\ref
{three terms}) is $\lesssim\varepsilon^{1/2}$
as required.
The second term can be bounded using Lemma~\ref{large coefficients}.
Since $\|\cdot\|_p \lesssim\|\cdot\|_{2,\infty}$ for $p>2$,
we obtain that
\[
\sup_{x \in S^{n-1}} \frac{1}{N} \sum_{i \in E_B} |\langle X_i,
x\rangle|^p
\lesssim\frac{1}{N} \|(\langle X_i, x\rangle)_{i \in E_B}\|
_{2,\infty}^p
\lesssim\frac{(n/\varepsilon)^{p/2}}{N}
\]
as required.
To compute the third term in the right-hand side of (\ref{three
terms}), consider for a fixed $x$ the random
variable $Z_i = |\langle X_i, x\rangle|$. Since $\E Z_i^q \le L^q$, we have
\[
\E Z_i^p \one_{\{Z_i \ge B\}}
\le\E Z_i^p (Z_i/B)^{q-p} \one_{\{Z_i \ge B\}}
\le\E Z_i^q / B^{q-p}
\le L^q B^{p-q}.
\]
Therefore, by our choice of $B$, we have
\begin{eqnarray*}
\sup_{x \in S^{n-1}} \E\frac{1}{N} \sum_{i \in E_B} |\langle X_i,
x\rangle|^p
&=& \sup_{x \in S^{n-1}} \frac{1}{N} \sum_{i=1}^N \E Z_i^p \one_{\{Z
\ge B\}} \\
&\le& L^q B^{p-q}
\lesssim\biggl( \frac{n}{\varepsilon N} \biggr)^{{3p}/({2(p-1)})}\\
&\le&\biggl( \frac{n}{\varepsilon N} \biggr)^{3/2}.
\end{eqnarray*}
Combining these estimates, we complete the proof.
\end{pf*}
\begin{remark*}
Theorem~\ref{norm p>2} now follows easily.
We can assume that $N \ge C$ where $C = C_{p,K,\delta}$ is suitably large.
Now, for $N \le n$ this result follows from Theorem~\ref{norm} since
$\|A\|_{\ell_2 \to\ell_p} \lesssim\|A\|_{\ell_2 \to\ell
_{2,\infty}}$.
For $N \ge n$, the result follows from Proposition~\ref{sampling sharp}
with $\varepsilon= 1$, noting that $(\E|\langle X_i, x\rangle
|^p)^{1/p} \le L$ as $p \le q$.
\end{remark*}

\section*{Acknowledgment} The author is grateful to the referees for
their thorough
reading of the first two versions of this manuscript and for many suggestions,
which greatly improved the presentation of this paper.

%

%
\printaddresses

\end{document}